\theoremstyle{plain}
\newtheorem{theorem}[equation]{Theorem}
\newtheorem{corollary}[equation]{Corollary}
\newtheorem{lemma}[equation]{Lemma}
\newtheorem{proposition}[equation]{Proposition}
\theoremstyle{definition}
\newtheorem{define}[equation]{Definition}
\newtheorem{definitions}[equation]{Definition}
\newtheorem{remark}[equation]{Remark}
\newtheorem{question}[equation]{Question}
\newcommand{\p}{\partial}
\newcommand{\IH}{\mathbb{H}}
\newcommand{\IN}{\mathbb{N}}
\newcommand{\IR}{\mathbb{R}}
\newcommand{\inj}{\mathrm{inj}}
\title{Harmonic Forms, Price Inequalities, and Benjamini-Schramm Convergence}
\author{\small{Luca F. Di Cerbo}\footnote{Partially supported by NSF grant DMS-2104662} \\ \scriptsize{University of Florida}\\ \footnotesize{\textsf{ldicerbo@ufl.edu}} \and \small{Mark Stern}\footnote{Partially supported by Simons Foundation Grant 3553857} \\ \scriptsize{Duke University} \\ \footnotesize{\textsf{stern@math.duke.edu}}}
\date{}
\begin{document}

\maketitle

%%%%%%%%%%%%%%%%%%%%
\begin{abstract}
We study Betti numbers of sequences of Riemannian manifolds which Benjamini-Schramm converge to their universal covers. Using the Price inequalities we developed elsewhere, we derive two distinct convergence results. First, under a negative Ricci curvature assumption and \emph{no} assumption on sign of the sectional curvature, we have a convergence result for \emph{weakly} uniform discrete sequences of closed Riemannian manifolds. In the negative sectional curvature case, we are able to remove the weakly uniform discreteness assumption. This is achieved by combining a refined Thick-Thin decomposition together with a Moser iteration argument for harmonic forms on manifolds with boundary.\footnote{2020 {\em Mathematics Subject Classification}. Primary 58A12; Secondary 58A14.}
\end{abstract}

\vspace{8cm}
%%%%%%%%%%%%%%%%%%%%

\tableofcontents

\vspace{1cm}

%%%%%%%%%%%%%%%%%%%%%%%%%%%%%%%%

\section{Introduction}
Let $(M^n, g)$ be a closed Riemannian manifold.  Define the normalized Betti numbers and $L^2$-Betti numbers respectively as:
\begin{align}\tilde b_{k,g}(M):= \frac{b_k(M)}{Vol(M)}\,\,\text{ and   }\,\,\tilde b_{k,g}^{(2)}(M):= \frac{b_k^{(2)}(M)}{Vol(M)},
\end{align}
where $b_k(M)$ denotes the $kth$ Betti number of $M$ and $b_k^{(2)}(M)$ denotes the $kth$ $L^2$-Betti number.  
In  an influential paper \cite{Luck}, L\"uck shows that if $M$ is a closed manifold  with residually finite fundamental group, then 
\begin{align}\label{luck}\tilde b_{k,g}^{(2)}(M)=\lim_{l\to\infty}\tilde b_{k,g}(M_l),
\end{align} for any  tower of coverings $\{M_l\}_l$ of $M$  associated to a cofinal filtration of its fundamental group. The $L^2$-Betti numbers were originally defined analytically by Atiyah in \cite{Atiyah}, and L\"uck's theorem provides a remarkable connection between analysis and topology which has inspired considerable mathematics  in the last two or three decades, see for example the bibliography of  \cite{LuckBook}. 

More recently, Abert \emph{et al.} in \cite{Bergeron} and \cite{Bergeron2}  generalized Luck's approximation theorem in the context of lattices in Lie groups and in the context of finite volume manifolds of negative curvature. To describe this generalization, we first recall a rather weak notion of convergence of Riemannian manifolds,  Benjamini-Schramm convergence, which is adapted from graph theory \cite{BS01}. In Riemannian terms, this convergence is given as follows.

\begin{definitions}\label{BSdef}
	Let $(M_l, g_l)_l$ be a sequence of closed Riemannian manifolds which share a common universal Riemannian cover $(X, g)$. Given $x\in M_l$, we denote by $\inj_{g_{l}}(x)$ the injectivity radius of $(M_l, g_l)$ at $x$. We define the $R$-thin part of $(M_l, g_l)$, denoted  $(M_l)_{<R}$, by
	\[
	(M_l)_{<R}:=\{x\in M_{l}\, | \, \inj_{g_{l}}(x)< R\}.
	\]
Define a relative measure of the thin regions of $M_l$ by 
\begin{align}\rho(M_l,R):= \frac{Vol_{g_l}((M_l)_{<R})}{Vol_{g_l}(M_{l})}.\end{align}
We say that the sequence $(M_l, g_l)_l$ Benjamini-Schramm converges to $(X, g)$, if for any $R>0$ we have
	\[
	\lim_{l\to\infty}\rho(M_l,R)=0.
	\]
		Finally, we say that 
the sequence $(M_l, g_l)$ is \emph{uniformly discrete}, if there exists $\epsilon>0$ such that for any $l\in\IN$:
	\[
	\min_{x\in M_l}\inj_{g_l}(x)\geq \epsilon.
	\]
\end{definitions}
\vspace{0.1 in}
We can now state one of the main results in \cite{Bergeron}.\\

\begin{theorem}[Corollary 1.4 in \cite{Bergeron}]\label{luckbs}
Let $\{\Gamma_l\}_l$ be a sequence of uniformly discrete, torsion free lattices acting co-compactly on a symmetric space $G/K$ of non-compact type. Let $\{\Gamma_l\backslash G/K\}_l$ be the associated sequence of compact locally symmetric spaces. For any $k\leq\dim(G/K)$, if $\{\Gamma_l\backslash G/K\}_l$ Benjamini-Schramm converges to $G/K$ (equipped with the standard symmetric metric), we have 
\[
\lim_{l\to\infty}\tilde b_{k,g}(\Gamma_l\backslash G/K) =\beta^{(2)}_{k}(G/K),
\]	
where the $k$-th $L^2$-Betti number of the symmetric space, $\beta^{(2)}_{k}(G/K)$, is defined in \cite[6.24]{Bergeron}, and satisfies $\tilde b_{k,g}^{(2)}(\Gamma\backslash G/K)=  \beta^{(2)}_{k}(G/K)$ for every cocompact torsion free $\Gamma$. 
\end{theorem}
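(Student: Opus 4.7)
The plan is to go through the Hodge-theoretic representation of Betti numbers and compare heat traces on the quotient and on the universal cover, using the standard unfolding formula. Write $b_k(M_l) = \dim \mathcal{H}^k(M_l)$, and use the spectral inequality $\Pi_k \leq e^{-t\Delta_k}$ together with the fact that $b_k = \lim_{t\to\infty} \mathrm{Tr}(e^{-t\Delta_k|_{\mathrm{ker}^\perp}})$-free spectrum, to obtain for each $t>0$ the representation
\begin{align*}
\tilde b_k(M_l) \;=\; \lim_{t\to\infty}\frac{1}{\mathrm{Vol}(M_l)}\int_{M_l} \mathrm{tr}\, K^{M_l}_t(x,x)\, dV(x),
\end{align*}
where $K^{M_l}_t$ is the heat kernel on $k$-forms. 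On the symmetric space $X=G/K$ the heat kernel trace $\mathrm{tr}\,\tilde K_t(\tilde x,\tilde x)$ is constant in $\tilde x$, and $\beta^{(2)}_k(X) = \lim_{t\to\infty} \mathrm{tr}\,\tilde K_t(\tilde x,\tilde x)$.

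Next I would unfold $K^{M_l}_t(x,x) = \sum_{\gamma\in\Gamma_l}\tilde K_t(\tilde x,\gamma\tilde x)$ and split $M_l = (M_l)_{\geq R}\cup (M_l)_{<R}$. On the thick part $(M_l)_{\geq R}$, all nontrivial translates satisfy $d(\tilde x,\gamma\tilde x)\geq 2R$, and Gaussian off-diagonal decay of the heat kernel on $X$ (which has bounded geometry) gives
\begin{align*}
\bigl|\mathrm{tr}\, K^{M_l}_t(x,x) - \mathrm{tr}\,\tilde K_t(\tilde x,\tilde x)\bigr| \;\leq\; C(t)\, e^{-R^2/(5t)},
\end{align*}
so the thick-part integral is $(1-\rho(M_l,R))\cdot\mathrm{tr}\,\tilde K_t(\tilde x,\tilde x) + O(e^{-R^2/5t})$. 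The uniform discreteness hypothesis $\inj_{g_l}\geq \epsilon$, combined with the homogeneity (hence bounded-geometry) of $X$, yields a bound $|\mathrm{tr}\, K^{M_l}_t(x,x)|\leq C(t,\epsilon)$ uniformly in $l$ and $x$; multiplying by $\rho(M_l,R)\to 0$ kills the thin-part contribution. Sending $l\to\infty$, then $R\to\infty$, then $t\to\infty$ (carefully, using that the $t\to\infty$ limit exists) produces the upper bound $\limsup_l \tilde b_k(M_l) \leq \beta^{(2)}_k(X)$.

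For the reverse inequality $\liminf_l \tilde b_k(M_l) \geq \beta^{(2)}_k(X)$, I would replace $\Pi_k$ with the spectral projector $E_{[0,\lambda]}(\Delta_k)$ and estimate
\begin{align*}
b_k(M_l) \;\geq\; \mathrm{Tr}\, E_{[0,\lambda]}(\Delta_k) \;-\; \#\{\text{eigenvalues in }(0,\lambda]\}.
\end{align*}
The first term is handled by the same unfolding argument applied to a smoothed approximation $\varphi(\Delta_k)$ of $E_{[0,\lambda]}$; as $\lambda\downarrow 0$ the corresponding spectral measure on $X$ converges (weakly) to $\beta^{(2)}_k$ times the delta at $0$. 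The second term is the genuinely delicate point: one must rule out a concentration of small but nonzero eigenvalues on $M_l$, which is a form of spectral gap / strong convergence of the Plancherel measures studied in Abert et al.\ via their Kesten-type estimates and uniform discreteness.

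The main obstacle is precisely this spectral-convergence step in the $\liminf$ direction. The upper bound is an essentially soft consequence of Benjamini-Schramm convergence plus heat-kernel decay, but the lower bound requires ruling out small eigenvalues that could reduce $\dim\ker\Delta_k$ on $M_l$ below the $L^2$-dimension on $X$. Uniform discreteness enters here by giving uniform control (via e.g.\ Sobolev and heat-kernel upper bounds) of eigenfunction concentration, which is what ultimately makes the Plancherel measures of the $M_l$ converge weakly to the Plancherel measure of $X$ on a neighborhood of $0$ in the spectrum.
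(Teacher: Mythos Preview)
The paper does not prove this theorem; it is quoted verbatim as Corollary~1.4 of \cite{Bergeron} and serves only as background for the authors' own results (Theorems~\ref{RBS} and~\ref{aBS}). There is therefore no ``paper's own proof'' to compare against.

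That said, your outline is a reasonable sketch of the heat-kernel route to such a statement, and the upper bound $\limsup_l \tilde b_{k,g}(M_l)\leq \beta_k^{(2)}(X)$ is essentially what you claim: $b_k\leq \Tr e^{-t\Delta_k}$ for all $t$, the unfolding plus Gaussian decay on the $R$-thick part, and the uniform pointwise bound on the $R$-thin part from uniform discreteness, then $l\to\infty$, $R\to\infty$, $t\to\infty$ in that order. This is close in spirit to what the present paper does for its own theorems, where the role of the heat kernel is played by the projection kernel onto harmonic forms and the Price inequalities replace Gaussian decay.

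Your proposal has a genuine gap precisely where you flag it: the $\liminf$ direction. Writing $b_k\geq \Tr E_{[0,\lambda]}-\#\{\text{eigenvalues in }(0,\lambda]\}$ does not help unless you can show the subtracted term is $o(\mathrm{Vol}(M_l))$, and nothing you have set up gives that. In \cite{Bergeron} this is not handled by heat-kernel soft analysis at all; the authors prove weak-$*$ convergence of the relative Plancherel measures $\mu_{\Gamma_l\backslash G}$ to the Plancherel measure of $G$ (their Theorem~1.2 / Corollary~1.3), which is a representation-theoretic statement using the structure of $G$ and the Sauvageot density principle, and the Betti-number convergence is then read off via Matsushima's formula. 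Uniform discreteness enters to control the contribution of the thin part to test functions on $G$, not merely to bound the heat kernel pointwise. So your sketch correctly locates the difficulty but does not supply the mechanism that actually resolves it; a purely heat-kernel argument of the type you describe would give only the upper bound.
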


When  $\{\Gamma_l\}_l$ is a cofinal filtration of a given torsion free lattice acting co-compactly on $G/K$ (\emph{cf.} Theorem 2.1 in \cite{Wallach}),  the sequence of coverings $\{\Gamma_l\backslash G/K\}_l$ Benjamini-Schramm converges to $G/K$, and Theorem \ref{luckbs} is a genuine generalization of  L\"uck's original approximation theorem in the case of locally symmetric spaces. We also observe that, for subgroups of a fixed lattice, Theorem 1.12 in  \cite{Bergeron} provides effective bounds on the normalized Betti numbers. 

In the real hyperbolic case $G/K=\IH^n$, Abert \emph{et al.} in \cite{Bergeron} obtain their strongest result. Remarkably, they are able to remove the uniform discreteness assumption on the lattices. 

\begin{theorem}[Theorem 1.8. in \cite{Bergeron}]\label{luckbs2}
	Let $\{\Gamma_l\backslash\IH^n\}_l$ be a sequence of compact hyperbolic manifolds of dimension $n$ that Benjamini-Schramm converge to $\IH^n$. For any $k\leq n$, we have
	\[
	\lim_{l\to\infty}\tilde b_k(\Gamma_l\backslash \IH^n) =\beta^{(2)}_{k}(\IH^n).
	\]
\end{theorem}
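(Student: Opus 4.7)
The strategy is to express both sides of the desired equality as spatial averages of pointwise traces of harmonic projection kernels, and then to compare them using a thick-thin decomposition of $M_l$. Write $P_{k,l}(x,y)$ for the Schwartz kernel of the orthogonal projection from $L^2 \Lambda^k T^*M_l$ onto $\sH^k(M_l)$ and $P^{(2)}_k$ for the kernel of the projection onto $L^2$ harmonic $k$-forms on $\IH^n$. Then
\begin{align*}
\tilde b_k(M_l) = \frac{1}{Vol(M_l)} \int_{M_l} \tr P_{k,l}(x,x)\,dx, \qquad \beta^{(2)}_k(\IH^n) = \tr P^{(2)}_k(\tilde x,\tilde x),
\end{align*}
the latter independent of $\tilde x$ by $G$-homogeneity. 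It therefore suffices to show that the spatial average of the pointwise difference $\tr P_{k,l}(x,x) - \beta^{(2)}_k(\IH^n)$ tends to zero along the sequence.

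On the $R$-thick part $M_l\setminus(M_l)_{<R}$, every point $x$ has a ball $B_R(x)$ isometric to a geodesic ball of $\IH^n$. For any $\omega\in\sH^k(M_l)$ the Price inequality developed in our earlier work bounds $|\omega(x)|^2$ by $C(n,R)\int_{B_R(x)}|\omega|^2\,dy$, with $C(n,R)$ depending only on $n$ and $R$ and not on $l$. Applied to an orthonormal basis of $\sH^k(M_l)$ and combined with weak $L^2$-compactness for lifts of harmonic forms together with the $G$-invariant spectral theory of $\IH^n$, this yields $\tr P_{k,l}(x,x)\to \beta^{(2)}_k(\IH^n)$ uniformly on the thick part as $R\to\infty$: given $\varepsilon>0$, one can choose $R$ so that the thick-part integrand lies within $\varepsilon$ of $\beta^{(2)}_k(\IH^n)$ for all sufficiently large $l$.

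\textbf{The main obstacle} is the thin part, since without uniform discreteness the injectivity radius of $M_l$ collapses and harmonic forms can a priori concentrate inside Margulis tubes. Unlike in Theorem \ref{luckbs}, the thin region cannot be removed by shrinking $R$. My plan is to carry out a refined thick-thin decomposition, enlarging the thick set to a compact submanifold $N_l\subset M_l$ with smooth boundary lying inside a geometrically standard annular region of each Margulis tube, and then to run a Moser iteration for harmonic forms \emph{on manifolds with boundary}, in which the boundary integrals arising from integration by parts are absorbed using the explicit warped-product structure of hyperbolic tubes. This should produce, uniformly in $l$ and in the tube core lengths, a pointwise bound on $\tr P_{k,l}(x,x)$ throughout the thin region by a constant $C(n,R)$. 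Combined with Benjamini-Schramm convergence $\rho(M_l,R)\to 0$, this forces the thin-part contribution $\frac{1}{Vol(M_l)}\int_{(M_l)_{<R}} \tr P_{k,l}(x,x)\,dx$ to vanish as $l\to\infty$, and sending $R\to\infty$ afterwards closes the argument. The delicate point, and the one I expect to be the chief obstacle, is positioning $\partial N_l$ so that the boundary terms in the Moser iteration have the correct sign and the resulting iteration constant is independent of both $l$ and the tube core lengths; this is precisely what the \emph{refinement} of the thick-thin decomposition is engineered to accomplish, beyond the classical Margulis-lemma splitting.
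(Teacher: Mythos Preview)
This theorem is not proved in the paper; it is quoted from \cite{Bergeron} as background. The paper's own contribution in this direction is Theorem~\ref{barvolume}, which, specialized to $a=1$, shows only that $\tilde b_k(M_l)\to 0$ for degrees $k$ with $(n-1)-2k\geq 0$ (and their Poincar\'e duals). That is strictly weaker than Theorem~\ref{luckbs2}: it does not touch the middle degree $k=n/2$ where $\beta^{(2)}_k(\IH^n)>0$, and it never compares $\tr P_{k,l}(x,x)$ to the $L^2$-kernel on $\IH^n$ at all---it just drives the trace to zero via the Price decay estimate of Theorem~\ref{pinchedprice}.

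Your thin-part plan is essentially what the paper does and is sound, with one correction: the paper does \emph{not} Moser-iterate the ambient harmonic forms of $M_l$ across $\partial N_l$. Instead it passes, via the long exact sequence \eqref{bettiineq}, to harmonic forms on the truncated manifold $M_{l,T}$ satisfying Dirichlet boundary conditions, for which the Bochner identity \eqref{bochfin} produces a second-fundamental-form boundary term that can be absorbed (Proposition~\ref{bndrymoser}). For a harmonic form on $M_l$ with no boundary condition on $\partial N_l$, the boundary integral in \eqref{nobc} has no reason to have a sign, so your iteration as stated would not close.

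The genuine gap is the thick part. What you call a ``Price inequality'' here is only the local elliptic bound $|\omega(x)|^2\leq C(n,R)\int_{B_R(x)}|\omega|^2$; that gives uniform boundedness of $\tr P_{k,l}(x,x)$, not convergence to $\beta^{(2)}_k(\IH^n)$. The sentence ``weak $L^2$-compactness for lifts of harmonic forms together with the $G$-invariant spectral theory of $\IH^n$ \dots\ yields $\tr P_{k,l}(x,x)\to\beta^{(2)}_k(\IH^n)$'' is precisely the content of \cite{Bergeron}'s proof (heat-kernel comparison and Plancherel), and nothing in this paper or in \cite{DS17} supplies it. The actual Price inequality of Theorem~\ref{pinchedprice} is a \emph{decay} statement $\int_{B_1(p)}|\alpha|^2\leq c\,e^{-a_{n,k}R}\|\alpha\|^2_{L^2}$, valid only when $a_{n,k}\geq 0$; it forces the thick-part trace to zero in those degrees and says nothing when $k=n/2$. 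So your proposal, as written, recovers at most Theorem~\ref{barvolume}, not Theorem~\ref{luckbs2}.
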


More recently, in the preprint \cite{Bergeron2}, four of the seven authors of \cite{Bergeron}, extended L\"uck's approximation theorem to sequences of pinched negatively curved manifolds which Benjamini-Schramm converge to their universal cover.\\ 

%For $M$ compact, the Atiyah covering index theorem (ref?) says 
%\begin{align}\sum_k(-1)^kb_{k}(M) = \sum_k(-1)^kb_{k}^{(2)}(M).
%\end{align}
%According to the Singer conjecture, $b_k^{(2)}(M^n)=0$ for $k\not = \frac{n}{2}$ and $M$ compact aspherical. Hence if the Singer conjecture holds and $n=2m$, then %$b_m^{(2)}(M^n)=(-1)^m\chi(M)$, and all $L^2$-betti numbers are known in the compact aspherical case.  
 
In this paper, we contribute to this circle of ideas by extending the techniques of \cite{DS17} to prove and {\em quantify} vanishing of normalized Betti numbers  (in certain degrees) along sequences of closed Riemannian manifolds which Benjamini-Schramm converge to their universal covers. Whereas the focus of \cite{Bergeron} and \cite{Bergeron2} is to {\em relate} normalized Betti numbers and $L^2$-Betti numbers, in this paper we  concentrate on providing bounds and vanishing results for these quantities. Here we consider geometries more general than those considered in \cite{Bergeron2}.  Our techniques are rather distant from those of \cite{Bergeron} and \cite{Bergeron2}. Indeed, we rely on geometric inequalities for harmonic forms on negatively curved Riemannian manifolds which we described in \cite{DS17}. In particular, some of our results do not require \emph{any} direct assumption on the sectional curvature. 

The next definition is tailored to our analytical techniques, and it will be used throughout this paper. This definition is related to the notions of convergence considered in \cite{Bergeron} and \cite{Bergeron2}, but at the same time it contains some new elements.  

\begin{define}\label{weakdisc}
Let $(M_l, g_l)_l$ be a sequence of closed Riemannian manifolds which share a common universal Riemannian cover $(X, g)$. We say a sequence of manifolds $(M_l,g_l)_l$ is {\em weakly uniformly discrete} and converges to $(X, g)$ if there exists a sequence $\{R_l\}_l\subset (0,\infty)$ with 
\[
\lim_{l\to\infty}R_l = \infty,
\]
such that 
\[
\lim_{l\to\infty}\Big(1+\frac{1}{\inj_{g_l}(M_l)}\Big)^n\rho(M_l,R_l)=0.
\]
\end{define}

Before listing our main results, it is important to state the precise connection between the notion of convergence given in Definition \ref{weakdisc} and the usual Benjamini-Schramm convergence (\emph{cf.} \cite{Bergeron} and \cite{Bergeron2}).

\begin{remark}
	If a sequence of manifolds $(M_l,g_l)_l$  Benjamini-Schramm converges, then there is always a sequence $\{R_l\}_l$ converging to $\infty$  such that %if and only if for any $R>0$
	%\begin{align}\label{newdef1}
	%\lim_{l\to\infty}\rho(M_l, R) = 0.
	%\end{align}
	%Thus, if this is the case, there exists a sequence $\{R_l\}_l\subset (0,\infty)$ as in Definition \ref{weakdisc} such that
	\begin{align}\label{newdef2}
	\lim_{l\to\infty}\rho(M_l, R_l)=0.
	\end{align}
	Hence every uniformly discrete sequence which Benjamini Schramm converges is weakly uniformly discrete. %Conversely, if $\{R_l\}_l\subset (0,\infty)$ satisfies \eqref{newdef2}, by contradiction we see that \eqref{newdef1} must be satisfied. Concluding, a sequence of manifolds $(M_l,g_l)_l$  Benjamini-Schramm converges if and only if there exists a sequence $\{R_l\}_l\subset (0,\infty)$ with 
	%\[
	%\lim_{l\to\infty}R_l = \infty,
	%\]
	%such that 
	%\[
	%\lim_{l\to\infty}\rho(M_l,R_l)=0.
	%\]
	%Thus, if a sequence $(M_l, g_l)$ is weakly uniformly discrete then it is necessarily BS-convergent. Moreover, if $(M_l, g_l)$ is BS-convergent and uniformly discrete then it is necessarily weakly uniformly discrete. 
On the other hand, weakly uniformly discrete sequences may well have injectivity radius that goes to zero along a subsequence, and therefore need not be uniformly discrete in the sense of Definition \ref{BSdef}.
\end{remark}

We can now state our first result which requires only a negative {\em Ricci} curvature assumption, and no uniform lower bound on the injectivity radius. On the other hand, we require the weakly uniformly discrete assumption (\emph{cf.} Definition \ref{weakdisc}). 

\begin{theorem}\label{RBS}
	Let $(X^n, g)$ be a simply connected manifold without conjugate points and with $-1\leq\sec_g\leq 1$. Assume there exists $\delta>0$ such that
	\[
	-Ric_g\geq \delta g.
	\]
	Let $(M_l, g_l)$ be a weakly uniformly discrete sequence of closed manifolds converging to $(X, g)$. Then for any $k\in \IN$ such that $\delta>4k^2$, we have
	\[
	\lim_{l\to\infty}\tilde b_{k,g}(M_l) =0.
	\]
\end{theorem}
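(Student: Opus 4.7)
The plan is to represent $b_k(M_l)$ analytically as the integral of the trace of the projection onto harmonic $k$-forms, then split this integral using the scale $R_l$ provided by Definition \ref{weakdisc} and estimate each piece separately: the thin part with a mean-value inequality, and the thick part with the Price inequality of \cite{DS17}.

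Concretely, fix an $L^2_{g_l}$-orthonormal basis $\{\omega_1,\dots,\omega_{b_k(M_l)}\}$ of $\sH^k(M_l)$ and set
\[
K_k(x):=\sum_i |\omega_i(x)|_{g_l}^2, \qquad b_k(M_l)=\int_{M_l} K_k\,dV_{g_l}.
\]
Since the evaluation map $\mathrm{ev}_x\colon\sH^k(M_l)\to \Lambda^k T^*_xM_l$ has rank at most $\binom{n}{k}$, one obtains the pointwise bound
\[
K_k(x)\leq \binom{n}{k}\,\sup\bigl\{|\omega(x)|^2 : \omega\in\sH^k(M_l),\ \|\omega\|_{L^2(M_l)}=1\bigr\},
\]
so it is enough to control $|\omega(x)|^2$ uniformly over unit harmonic $k$-forms, separately in the thick and thin regions.

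For the thin part $(M_l)_{<R_l}$, the mean-value inequality for harmonic forms applied on the injectivity-radius ball at $x$, together with the two-sided curvature bound $-1\leq \sec_g\leq 1$ (which guarantees $\mathrm{Vol}_{g_l}(B_r(x))\geq c(n)r^n$ for small $r$), yields $|\omega(x)|^2\leq C(n)\inj_{g_l}(x)^{-n}$. Replacing $\inj_{g_l}(x)$ by its global infimum and normalizing by $\mathrm{Vol}_{g_l}(M_l)$ then bounds the thin contribution to $\tilde b_{k,g}(M_l)$ by $C(n,k)\inj_{g_l}(M_l)^{-n}\rho(M_l,R_l)$, which vanishes as $l\to\infty$ by Definition \ref{weakdisc}. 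For the thick part, the hypothesis $\inj_{g_l}(x)\geq R_l$ ensures that $B_{R_l}(x)\subset M_l$ lifts isometrically to a metric ball in $(X,g)$; on such a ball the Price inequality of \cite{DS17}, applied in the regime $-\mathrm{Ric}_g\geq \delta g$ with $\delta>4k^2$, yields a pointwise decay estimate of the form $|\omega(x)|^2\leq \eta(R_l)\|\omega\|_{L^2(M_l)}^2$ with $\eta(R_l)\to 0$ as $R_l\to\infty$. Consequently the thick contribution to $\tilde b_{k,g}(M_l)$ is at most $\binom{n}{k}\eta(R_l)$, which also tends to $0$, and the theorem follows.

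The main obstacle is precisely this last point: extracting from the Price monotonicity of \cite{DS17}, naturally stated as a growth rate for weighted $L^2$ energies of a single harmonic form, a pointwise decay that is uniform over all unit $\omega\in\sH^k(M_l)$ and strong enough to survive on a ball whose volume may grow exponentially. The no-conjugate-points assumption guarantees that $B_{R_l}(x)\subset(X,g)$ is diffeomorphic to a Euclidean ball so that Price monotonicity can be iterated from unit scale up to scale $R_l$, while the sharp threshold $\delta>4k^2$ enters through the refined Kato inequality applied to the Weitzenbock identity for the Hodge Laplacian on $k$-forms, where it is exactly the coefficient required to convert the Bochner-type estimate into genuine decay.
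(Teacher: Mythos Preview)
Your proposal is correct and follows essentially the same route as the paper: represent $b_k(M_l)$ as $\int_{M_l}\mathrm{Tr}\,K(x,x)\,dv$, bound $\mathrm{Tr}\,K(x,x)\leq\binom{n}{k}\sup_{\|\omega\|=1}|\omega(x)|^2$, split at scale $R_l$, control the thin part by Moser iteration at scale $\min(\inj_{g_l}(M_l),1)$ (giving the factor $(1+\inj_{g_l}(M_l)^{-1})^n$ that matches Definition~\ref{weakdisc}), and control the thick part by the Price inequality of \cite{DS17} (Theorem~\ref{PriceRicci} here) combined with Moser on a unit ball. The only cosmetic difference is that the paper records the explicit exponential rate $e^{-(\sqrt{\delta}-2k)(R_l-\rho-2)}$ for your $\eta(R_l)$, and uses $(1+1/\inj)^n$ rather than $\inj^{-n}$ on the thin part so that the bound remains finite when the injectivity radius is large.
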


We remark that, in \cite[Theorem 122]{DS17}, under the same curvature assumptions as in Theorem \ref{RBS}, we proved the following vanishing theorem for $L^2$-Betti numbers:
\[
b^{(2)}_{k}(M_l)=0,\quad \text{for all}\quad l.
\] 
Thus, Theorem \ref{RBS} asserts the convergence along the Benjamini-Schramm sequence of certain normalized Betti numbers to the corresponding $L^2$-Betti number.  \\

If we assume the sectional curvature to be strictly negative, the techniques developed in \cite{DS17} cover a larger range of Betti numbers. Using this fact, we are able to prove the following (see Theorem \ref{barvolume} for a stronger statement and details of the proof).

\begin{theorem}\label{aBS}
	Let $(X^n, g)$ be a simply connected manifold with 
	\[
	-a^2\leq\sec_g\leq -1,
	\]
	and $a\geq 1$. Let $(M_{l}, g_{l})$ a sequence of closed Riemannian manifolds BS-converging to $(X^n, g)$. For any $k\in\IN$ such that 
	\[
	a_{n, k}:=(n-1)-2ka\geq 0,
	\]
	we have
	\[
	\lim_{l\rightarrow\infty}\tilde b_{k,g}(M_{l})  =0.
	\]
\end{theorem}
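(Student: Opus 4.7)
The plan is to express the normalized Betti number via Hodge theory and then combine the Price $L^2$-decay inequalities of \cite{DS17} with a careful handling of the thin part. Concretely, choose an $L^2$-orthonormal basis $\{\omega_{l,i}\}_i$ for the space of harmonic $k$-forms on $(M_l,g_l)$ and set
\[
T_{l}(x) := \sum_i |\omega_{l,i}(x)|_{g_l}^2,
\]
so that $b_k(M_l) = \int_{M_l} T_l\,dv_{g_l}$ and $\tilde b_{k,g}(M_l)$ is the average of $T_l$ on $M_l$. The strategy is to split this average over the $R_l$-thick part $M_l\setminus (M_l)_{<R_l}$ and the $R_l$-thin part $(M_l)_{<R_l}$ for a well-chosen sequence $R_l\to\infty$ provided by the BS hypothesis, show that $T_l$ decays uniformly on the thick part at a rate determined by $a_{n,k}=(n-1)-2ka$, and control the thin contribution by its relative volume.

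On the thick part the argument goes as follows. On any geodesic ball $B_{g_l}(x,R)\subset M_l$ with $R \le \inj_{g_l}(x)$, one lifts each harmonic $k$-form to the corresponding ball in $(X,g)$ and applies the Price inequality of \cite{DS17}, which under $-a^2\le \sec_g\le -1$ and $a_{n,k}\ge 0$ forces the $L^2$-mass on $B_{g}(\tilde x, r)$ to be a nonincreasing (and, when $a_{n,k}>0$, exponentially decaying) function of $r$. Combined with an interior mean value / Moser iteration estimate for the pointwise norm of a harmonic form in terms of its $L^2$ norm on a surrounding ball, this yields a bound
\[
T_l(x) \le C(R)\quad \text{on} \ M_l\setminus (M_l)_{<R},
\]
with $C(R)\to 0$ as $R\to\infty$ precisely because the Price decay eats the volume growth of balls in $X$ in the regime $a_{n,k}\ge 0$.

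For the thin part one does not have the weakly uniform discreteness hypothesis available, so a crude pointwise bound on $T_l$ in terms of $1/\inj_{g_l}$ is inadmissible. The idea, following the strategy sketched in the abstract, is to invoke a refined thick-thin decomposition of $M_l$ in which the thin region is cut out along a hypersurface of controlled geometry, and to run a Moser iteration for harmonic forms \emph{on the resulting manifold with boundary}. This boundary-version of the mean value inequality propagates the estimate $T_l\le C(R_l)$ across the boundary of the thick set and into a definite collar of the thin region, so that the residual contribution from the genuinely thin part is bounded by a universal constant times $\rho(M_l, R_l)$. Integrating and using $\rho(M_l,R_l)\to 0$ with $C(R_l)\to 0$ simultaneously gives $\tilde b_{k,g}(M_l)\to 0$; this is the content of the more quantitative Theorem \ref{barvolume}.

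The main obstacle is the last step, namely developing the boundary Moser iteration for harmonic forms on the thick piece viewed as a manifold with boundary, with estimates independent of the $M_l$ in the sequence. The difficulty is that a harmonic form on $M_l$ does not satisfy absolute or relative boundary conditions on the cut surface, so one must both choose a good cutting hypersurface in the refined thick-thin decomposition (with geometry controlled purely by $a$ and $n$) and carry out the iteration using only interior elliptic information plus the Price-type identities. Once these ingredients are in place, Theorem \ref{aBS} follows by combining the thick and thin estimates and passing to the limit along the BS sequence.
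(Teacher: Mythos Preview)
Your outline captures the thick-part argument correctly, but the treatment of the thin part diverges from the paper and contains a real gap. You propose to keep working with the trace function $T_l$ built from harmonic forms on the \emph{closed} manifold $M_l$, and to bound it near and inside the tubes by a boundary Moser iteration applied to those same forms. You correctly flag that these forms satisfy no boundary condition on the cut hypersurface; but you do not say how to neutralize the resulting boundary term in the Bochner identity. In the paper's Proposition~\ref{bndrymoser}, the boundary integral $\int_{\partial M_T}\langle \nabla_\nu h,\psi^2 h\rangle\, d\sigma$ collapses to a second-fundamental-form term precisely because the Dirichlet (or Neumann) condition kills one of the two tangential pieces; without that, the boundary term involves tangential derivatives of $h$ that you have no a priori control over, and the iteration does not close. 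Moreover, even if you could iterate across the cut, the tubes extend all the way to the core geodesics where the injectivity radius is arbitrarily small, and you give no mechanism for a uniform pointwise bound on $T_l$ there; ``a definite collar'' does not cover the deep part of the tube.

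The paper sidesteps this entirely. It does \emph{not} attempt to bound the $M_l$-harmonic forms on the tubes. Instead it excises the tubes to form $M_{l,T}$, uses the long exact sequence of the pair $(M_l,\cup_\gamma V_\gamma)$ to obtain
\[
b_k(M_l)\le \dim_{\IR} H^k(M_{l,T},\partial M_{l,T};\IR)+N_T(M_l),
\]
bounds $N_T(M_l)/Vol(M_l)$ by a volume-quantum argument (Lemma~\ref{numbertubes}), and then represents the relative cohomology by harmonic forms on $M_{l,T}$ satisfying \emph{Dirichlet} boundary conditions. For these the boundary Moser iteration of Proposition~\ref{bndrymoser} is legitimate and yields the uniform sup bound \eqref{runtiprun} on all of $M_{l,T}$; the Price inequality then handles the thick part exactly as you describe (Lemma~\ref{lemma1}). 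The missing idea in your proposal is this passage to relative cohomology and Dirichlet harmonic forms; once you have it, the boundary Moser step becomes routine rather than the main obstacle.
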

\vspace{0.1in}

Once again, under the same curvature assumptions as in Theorem \ref{aBS}, we have elsewhere proved the following vanishing theorem for $L^2$-Betti numbers
\[
b^{(2)}_{k}(X^n)=0 \quad \Rightarrow \quad b^{(2)}_{k}(M_l)=0 \quad \text{for all} \quad l;
\] 
see Section 7 and Proposition 126 in \cite{DS17} (\emph{cf.} also Proposition 4.1 in \cite{Donnelly} when $a_{n, k}>0$, and \cite{Dodziuk} when $a=1$). Hence  Theorem \ref{aBS} is already a consequence of the preceding references and \cite{Bergeron2}.  None the less, our convergence result is completely independent of the theory of $L^2$-Betti numbers, and it follows directly from the Price inequalities for harmonic forms we developed in \cite{DS17}.  Indeed all of the analysis can be performed directly on the sequence of compact manifolds, without the need of studying $L^2$-harmonic forms on the universal Riemannian cover.\\

Observe that Theorem \ref{aBS}, unlike Theorem \ref{RBS}, does \emph{not} require any uniform discreteness assumption. This greater generality is present in \cite{Bergeron2} and Theorem \ref{luckbs2} as well, and it depends crucially on the fact that in the negative sectional curvature regime, thanks to the Gromov-Margulis lemma (\emph{cf.}  \cite{BGS85}), we understand quite well the topology of regions with small injectivity radius. On the other hand, our proof is substantially different from the approach presented in \cite{Bergeron2}. \\ \\%First, it relies on a Price inequality for harmonic forms we derived in \cite{DS17}, and it does not depend on heat kernel estimates. Second, and more importantly, our strategy does not use any ``coarse geometry'' type arguments. Concretely, we first refine a controlled Thick-Thin decomposition due to Buser, Colbois and Dodziuk \cite{BCD93}, see Section \ref{newTT}. This well behaved Thick-Thin decomposition has many nice features, and in particular allows us to obtain uniform estimates for harmonic forms near the boundary of the thick region.

\noindent\textbf{Acknowledgments}. LFDC thanks Duke University, which he visited on two occasions during the preparation of this work, for its hospitality and support. The authors thank Xiaolong Hans Han for his interest in this work and for pointing out an inaccuracy in a preliminary version of this work. Finally, we thank the referee for suggestions that improved the exposition.

\section{Dimension Estimates Revisited}

Let $(M^n, g)$ be a closed Riemannian manifold, and denote by $\mathcal{H}^k_{g}(M)$ the finite dimensional vector space of harmonic $k$-forms. Define normalized Betti numbers  
\begin{align}\tilde b_{k,g}(M):= \frac{b_k(M)}{Vol_{g}(M)}.
\end{align}
In Section 5 of \cite{DS17} and again in Lemma \ref{keyestimate} below, we show 
\begin{align}\label{peakest}
\tilde b_{k,g}(M)\leq \left(\begin{array}{c}n\\k\end{array}\right)\max \{\frac{\|\alpha\|^2_{L^\infty}}{\|\alpha\|^2_{L^2}}:\alpha\in\mathcal{H}^k_g(M)\setminus\{0\}\}.
\end{align}
 Under various hypotheses on  the Ricci curvature or the Riemannian curvature and $k$, in \cite{DS17} we showed exponential or polynomial bounds in the injectivity radius for the normalized Betti numbers.  Those estimates, in conjunction with \eqref{peakest} suffice to establish convergence to zero of sequences of  normalized Betti numbers of closed Riemannian manifolds whose injectivity radii diverge, for example, sequences of real hyperbolic manifolds associated to a cofinal filtration of a given torsion free co-compact lattice in $\textrm{Iso}(\IH^n)=PO(n, 1)$, with $k\not = \frac{n}{2}$. On the other hand, for sequences of closed Riemannian manifolds that converge in the Benjamini-Schramm sense, it is not necessarily the case that the injectivity radius goes to infinity (even if the pointed injectivity radius goes to infinity almost everywhere). Thus, we modify the dimension estimate for $\mathcal{H}^k_g(M)$ used in Section 5 of \cite{DS17}, in order to obtain vanishing results in this broader context.

%Given $(M^n, g)$, the metric $g$ induces a pointwise inner product $\langle,\rangle$ on $\Omega^{k}T^{*}_{x}M$ for any $x\in M$. Thus, by integration we obtain a global inner product on $\Omega^{k}T^{*}M$ given by
%\begin{align}\label{GlobalL2}
%(\alpha, \beta)_{L^{2}}=\int_{M}\langle\alpha, \beta\rangle d\mu_g.
%\end{align}
 % with respect to the global $L^{2}$-inner product defined above. In other words, we have
%\begin{align}\label{FixBasis}
%\int_{M}<\alpha_i, \alpha_{j}>d\mu_{g}=\delta_{ij},
%\end{align}
%for any $i, j \in\{1, ..., l\}$. 
 Let $K(\cdot,\cdot)$ denote the Schwarz kernel for the $L^2$ orthogonal projection onto $\mathcal{H}^{k}_{g}(M)$. Thus for  $x, y \in M$,  
\[
K(x, y) \in Hom( \Omega^{k}T^{*}_{y}M , \Omega^{k}T^{*}_{x}M).
\]
Given an $L^2$-orthonormal basis $\{\alpha_j\}_{j=1}^l$ for $\mathcal{H}^{k}_{g}(M)$, we have 
\begin{align}\label{KMap}
K(x, y) =\sum^{l}_{i=1}\alpha_{j}(x)\langle\cdot, \alpha_{j}(y)\rangle.
\end{align}
  Next, we derive a pointwise estimate on the trace of $K(x, x)$.

\begin{lemma}\label{keyestimate}
	Given  $ K(\cdot, \cdot)$ as above, we have for any $x\in M$
	\[
	0\leq TrK(x, x)\leq \binom{n}{k}\sup_{\alpha\in\mathcal{H}^{k}_{g}(M):||\alpha||^{2}_{L^{2}}=1}|\alpha(x)|^{2}.
	\] 
	\end{lemma}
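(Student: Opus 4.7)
The plan is to recognize $K(x,x)$ as a positive semi-definite operator on the finite-dimensional space $\Omega^k T^*_x M$ (of dimension $\binom{n}{k}$), and then bound its trace by the product of its rank and its operator norm.

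First, I would verify the lower bound. Using the expression \eqref{KMap} evaluated at $y=x$, for any $v \in \Omega^k T^*_x M$ one computes
\[
\langle K(x,x)v, v\rangle = \sum_{j=1}^{l} |\langle v, \alpha_j(x)\rangle|^2 \geq 0,
\]
so $K(x,x)$ is self-adjoint and positive semi-definite; this gives $0\leq \Tr K(x,x)$.

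For the upper bound, the key observation is that $K(x,x)$ factors as $\mathrm{ev}_x\circ \mathrm{ev}_x^{*}$, where $\mathrm{ev}_x:\mathcal{H}^k_g(M)\to \Omega^kT^*_xM$ is the evaluation map $\alpha\mapsto \alpha(x)$ and $\mathrm{ev}_x^{*}$ is its Hilbert-space adjoint with respect to the $L^2$ inner product. Indeed, for $v\in \Omega^k T^*_x M$ one has $\mathrm{ev}_x^{*}(v)=\sum_j \langle v,\alpha_j(x)\rangle\, \alpha_j$, and applying $\mathrm{ev}_x$ recovers exactly \eqref{KMap}. Consequently
\[
\|K(x,x)\|_{\mathrm{op}} = \|\mathrm{ev}_x\|_{\mathrm{op}}^2 = \sup_{\alpha\in \mathcal{H}^k_g(M),\, \|\alpha\|_{L^2}=1}|\alpha(x)|^2.
\]

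Finally, since $K(x,x)$ is a positive semi-definite operator on a space of dimension $\binom{n}{k}$, its trace is bounded by $\binom{n}{k}$ times its operator norm:
\[
\Tr K(x,x) \leq \binom{n}{k}\, \|K(x,x)\|_{\mathrm{op}} \leq \binom{n}{k}\sup_{\alpha\in \mathcal{H}^k_g(M):\|\alpha\|_{L^2}=1}|\alpha(x)|^2,
\]
which is the desired bound. There is no real obstacle here; the only point requiring a little care is recognizing the adjoint-product structure of $K(x,x)$, after which everything reduces to standard linear algebra on a finite-dimensional inner product space. A fully self-contained alternative would be to expand any $v\in \Omega^k T^*_x M$ in an orthonormal basis $\{e_I\}$ of size $\binom{n}{k}$ and apply Parseval's identity to the bounded linear functionals $\alpha\mapsto \langle \alpha(x),e_I\rangle$ on $\mathcal{H}^k_g(M)$, summing the resulting $\binom{n}{k}$ estimates.
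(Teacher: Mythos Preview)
Your proof is correct and follows essentially the same approach as the paper: both bound $\Tr K(x,x)$ by $\binom{n}{k}$ times the top eigenvalue of the positive semi-definite operator $K(x,x)$, and then identify that eigenvalue with $\sup_{\|\alpha\|_{L^2}=1}|\alpha(x)|^2$. The only difference is cosmetic: you invoke the identity $\|\mathrm{ev}_x\mathrm{ev}_x^*\|_{\mathrm{op}}=\|\mathrm{ev}_x\|_{\mathrm{op}}^2$ abstractly, whereas the paper exhibits the extremizing harmonic form explicitly as $\alpha(\cdot)=K(\cdot,p)z/\sqrt{\lambda}$ for $z$ a top eigenvector of $K(p,p)$.
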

\begin{proof}
	
	Fix a point $x\in M$, and let  $\{e_{i}\}^{\binom{n}{k}}_{i=1}$ be a local orthonormal frame for $\Omega^{k}T^{*}M$ in a neighborhood of $x$. 
	Then
	\[
	Tr K(x, x)=\sum^{\binom{n}{k}}_{i=1}\langle K(x, x)(e_{i}), e_{i}\rangle=\sum^{l}_{i=1}|\alpha_{i}|^{2}_{x}\geq 0.
	\]
	Next, given a point $p\in M$, there exists a unit eigenvector $z$ of $K(p, p)$ with maximal eigenvalue say $\lambda$. Thus, by construction
	\[
	\langle K(p, p)z, z\rangle =\lambda |z|^{2}_{p}=\lambda,
	\]
	with
	\[
	\langle K(p, p)z, z\rangle=\sum^{l}_{i=1}\langle z, \alpha_{i}(p)\rangle\langle z, \alpha_{i}(p)\rangle.
	\]
	Thus 
	\begin{align}\notag
	&\int_{M}\langle K(x, p)z,  K(x, p)z\rangle d\mu_{g}\\ \notag
	 =&\int_{M}\big\langle \sum^{l}_{i=1}\alpha_{i}(x)\langle z, \alpha_{i}(p)\rangle, \sum^{l}_{j=1}\alpha_{j}(x)\langle z, \alpha_{j}(p)\rangle \big\rangle d\mu_{g}\\ \notag
	 =&\sum^{l}_{i, j=1}\langle z, \alpha_{i}(p)\rangle \langle z, \alpha_{j}(p)\rangle\int_{M}\langle \alpha_{i}(x), \alpha_{j}(x)\rangle d\mu_{g}\\ \notag
	 =&\sum^{l}_{i, j=1}\langle z, \alpha_{i}(p)\rangle\langle z, \alpha_{j}(p)\rangle\delta_{ij}=\lambda.
	\end{align}
	Now, set 
	\[
	\alpha(x):=\frac{K(x, p)z}{\sqrt{\lambda}}\in \mathcal{H}^{k}_{g}(M),
	\]
	with
	\[
	\|\alpha\|_{L^2}=1.
	\]
	In sum, we have found an  $\alpha\in\mathcal{H}^{k}_{g}(M)$ such that
	\[
	||\alpha||^{2}_{L^{2}}=1, \quad |\alpha(p)|^{2}=\lambda.
	\]
	As $\lambda$ was the largest eigenvalue of $K(p,p)$, we have the estimate
	\[
	TrK(p, p)\leq \binom{n}{k}\lambda\leq \binom{n}{k}\sup_{||\alpha||^{2}_{L^{2}}=1}|\alpha(p)|^{2}.
	\]
	Since $p$ is an arbitrary point in $M$, the proof is complete.
\end{proof}

The following lemma is the usual elliptic regularity for harmonic forms in bounded geometry.
 One proof is a standard application of Moser iteration. See for example \cite[Proposition 2.2]{LS}, where the theorem is proved for hyperbolic manifolds and Proposition \ref{bndrymoser}, where it is proved for manifolds with boundary.  
\begin{lemma}\label{Moser}
	Let $(M^{n}, g)$ be a closed Riemannian manifold with 
	\[
	-a\leq\sec_{g}\leq 1,
	\]
	and let
	\[
	\inj_g(M):=\min_{p\in M}\inj_{g}(p)>0
	\]
	be the global injectivity radius. Given a harmonic $k$-form $\alpha\in\mathcal{H}^{k}_{g}(M)$, for any $p\in M$ and $L<\min(\inj_{g}(M), 1)$ there exists a strictly positive constant $d(n, a,k, L):= d(n,a, k)(1+\frac{1}{L})^n$ such that
	\[
	||\alpha||^{2}_{L^{\infty}(B_{\frac{L}{2}}(p))}\leq d(n,a, k, L)||\alpha||^{2}_{L^{2}(B_{L}(p))}.
	\]
\end{lemma}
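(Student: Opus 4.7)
My plan is to reduce to a unit-ball estimate via a constant rescaling of the metric, and then run the classical Moser iteration on the subsolution $|\alpha|^2$. Set $g':= L^{-2}g$. Under this rescaling harmonicity is preserved (since $\Delta_{g'}=L^2\Delta_g$), the ball $B_L(p,g)$ becomes the unit ball $B_1(p,g')$, the curvature satisfies $|\sec_{g'}|\leq\max(a,1)$ (using $L<1$), and $\inj_{g'}(p)\geq 1$. By Rauch/Bishop--Gromov comparison, the geometry of $B_1(p,g')$ is then uniformly controlled by $n$ and $a$; in particular, Sobolev constants on concentric sub-balls are bounded by constants depending only on $n$ and $a$.

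Next, the Bochner--Weitzenb\"ock formula applied to the harmonic form $\alpha$ reads
$$\tfrac{1}{2}\Delta_{g'}|\alpha|^2 = -|\nabla\alpha|^2 - \langle\sR_k\alpha,\alpha\rangle,$$
where $\sR_k$ is the Weitzenb\"ock curvature endomorphism on $k$-forms. The rescaled curvature tensor is pointwise bounded by a constant depending only on $n$ and $a$, so $|\sR_k|\leq C(n,k,a)$, which gives the differential inequality
$$\Delta_{g'}|\alpha|^2\leq C(n,k,a)\,|\alpha|^2$$
on $B_1(p,g')$. Thus $u:=|\alpha|^2$ is a nonnegative subsolution of a uniformly elliptic equation with bounded zeroth-order potential. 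Standard Moser iteration --- testing against $\eta^2 u^{q-1}$ with cutoffs $\eta$ supported in nested sub-balls between radii $1/2$ and $1$, applying the uniform Sobolev inequality, and iterating over $q=\beta^j$ with $\beta=n/(n-2)$ --- produces a constant $d(n,a,k)$ such that
$$\|\alpha\|^2_{L^\infty(B_{1/2}(p),g')}\leq d(n,a,k)\,\|\alpha\|^2_{L^2(B_1(p),g')}.$$

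Finally, I undo the rescaling. The pointwise norm of a $k$-form transforms as $|\alpha|^2_{g'}=L^{2k}|\alpha|^2_g$ and the volume form as $d\mu_{g'}=L^{-n}d\mu_g$, so the factors of $L^{2k}$ cancel and the estimate above translates to
$$\|\alpha\|^2_{L^\infty(B_{L/2}(p),g)}\leq d(n,a,k)\,L^{-n}\|\alpha\|^2_{L^2(B_L(p),g)},$$
whence bounding $L^{-n}\leq (1+1/L)^n$ yields the claim. The main technical point is securing a Sobolev constant on $B_1(p,g')$ that depends only on $n$ and $a$; the rescaling to a unit ball with two-sided curvature bounds and unit injectivity radius is engineered precisely to supply this bounded-geometry input uniformly in $L$.
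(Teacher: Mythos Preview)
Your proof is correct and follows the same Moser-iteration strategy the paper invokes (by reference to \cite{LS} and Proposition~\ref{bndrymoser}). The only difference is packaging: rather than tracking the $L$-dependence through the cutoff gradients $|d\eta_k|\leq 2^{k+1}/L$ as in the iteration of Proposition~\ref{bndrymoser}, you rescale the metric by $L^{-2}$ to normalize to a unit ball in bounded geometry, run the iteration there with constants depending only on $(n,a,k)$, and recover the $L^{-n}\leq (1+1/L)^n$ factor when undoing the rescaling.
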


%\begin{proof}
	
% The extension to our context follows immediately substituting the bounded variable curvature for constant curvature and using  Lemma 2.24 of {\cite[Chapter 2]{aubin}}  to replace \cite[(2.1)]{LS}. More precisely, one replaces \cite[(2.1)]{LS} by the statement that there exists 
%	$ S_M>0$, 	such that for all $ p\in M$, $ L<\max\{\frac{inj_g( M)}{2},1\} $, and all compactly supported smooth functions on the ball $ \xi\in C_c^\infty(B_{L}(p))$,  one has 
%	\begin{align}\label{sob3}S_M \|d \xi\|^2_{L^2(B_L(p))}\geq \| \xi\|_{L^{\frac{2n}{n-2}}(B_L(p))}^2. 
%	\end{align}
%	We refer to {\cite[Chapter 2]{aubin}} for this well-known statement.
%\end{proof}

Combining Lemma \ref{keyestimate} with Lemma \ref{Moser}, we get the key estimate of this section.

\begin{lemma}\label{estimate}
	Given $(M^n, g)$,  and $ K(\cdot,\cdot)$ as above, there exists a constant $d_0=d_0(n,a, k, \inj_g(M))>0$ such that
	\[
	0\leq TrK(x, x)\leq d_0(n, a,k, \inj_g(M)),
	\]
	 for any $x\in M$.
\end{lemma}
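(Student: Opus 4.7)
The plan is to chain together the two preceding lemmas; no new ingredient is needed. By Lemma \ref{keyestimate} we already have the pointwise bound
\[
TrK(x,x)\leq \binom{n}{k}\sup_{\alpha\in\mathcal{H}^{k}_{g}(M),\,\|\alpha\|_{L^2}=1}|\alpha(x)|^{2},
\]
so it suffices to produce a constant depending only on $n$, $a$, $k$, $\inj_g(M)$ that controls $|\alpha(x)|^2$ uniformly over the unit sphere of $\mathcal{H}^k_g(M)$ and over $x\in M$.

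Next I would fix an auxiliary radius $L$ that sits just below $\min(\inj_g(M),1)$, for instance $L:=\tfrac{1}{2}\min(\inj_g(M),1)$, in order to apply Lemma \ref{Moser}. For any $x\in M$ and any harmonic $k$-form $\alpha$ with $\|\alpha\|_{L^2(M)}=1$, Lemma \ref{Moser} with center $p=x$ gives
\[
|\alpha(x)|^{2}\leq \|\alpha\|_{L^{\infty}(B_{L/2}(x))}^{2}\leq d(n,a,k,L)\,\|\alpha\|_{L^{2}(B_L(x))}^{2}\leq d(n,a,k,L),
\]
using the trivial domain monotonicity $\|\alpha\|_{L^2(B_L(x))}\leq\|\alpha\|_{L^2(M)}=1$. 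Substituting into the bound from Lemma \ref{keyestimate} and recalling that $d(n,a,k,L)=d(n,a,k)(1+1/L)^n$, I would set
\[
d_0(n,a,k,\inj_g(M)):=\binom{n}{k}\,d(n,a,k)\Big(1+\tfrac{1}{L}\Big)^{n},
\]
which is expressible in terms of $n$, $a$, $k$, and $\inj_g(M)$ alone, and gives the required uniform bound.

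There is essentially no obstacle here, since the hard analytic content (a Moser iteration for harmonic forms in bounded geometry, and the trace identity for the projection kernel) has already been isolated into Lemma \ref{Moser} and Lemma \ref{keyestimate}. The only mild point worth flagging is that Lemma \ref{Moser} requires $L<\min(\inj_g(M),1)$ strictly, which is why I pick $L$ as a fixed fraction of that minimum rather than the minimum itself; this also makes the explicit dependence $(1+1/L)^n$ on the injectivity radius transparent, which is precisely the form that will be needed later when combining Lemma \ref{estimate} with the weakly uniformly discrete hypothesis of Definition \ref{weakdisc}.
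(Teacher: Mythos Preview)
Your proof is correct and follows exactly the same approach as the paper: invoke Lemma~\ref{keyestimate} to reduce to a pointwise bound on unit-norm harmonic forms, then apply the Moser estimate of Lemma~\ref{Moser}. You have simply made the choice of $L$ and the resulting constant more explicit than the paper does.
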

\begin{proof}
	By Lemma \ref{keyestimate}, we have
	\[
	0\leq TrK(x, x)\leq \binom{n}{k}\sup_{\alpha\in\mathcal{H}^k_g(M):||\alpha||^{2}_{L^{2}}=1}|\alpha(x)|^{2}.
	\]
	  Now apply  Lemma \ref{Moser} to obtain the desired estimate.
\end{proof}

\section{Negative Ricci Curvature}\label{negativericci}

In this section, we study manifolds with negative Ricci curvature. 

\begin{define}
	Let $(M, g)$ be a complete Riemannian manifold. Given any $R>0$, we define the $R$-thin part of $(M, g)$ as
	\[
	M_{<R}:=\{x\in M\,\, | \,\, \inj_g(x)< R\},
	\]
	where $\inj_{g}(x)$ is the injectivity radius of $(M, g)$ at the point $x$. We define the $R$-thick part, denoted by $M_{\geq R}$, as the complement of the $R$-thin part.
\end{define}

The proof of \cite[Theorem 66]{DS17}, implies the following theorem. %This is our Price inequality for negative Ricci curvature.

\begin{theorem}\label{PriceRicci}
	Let $(M^{n}, g)$ be a compact manifold  with $-1\leq \sec_{g}\leq 1$. 
	Given $k\in \IN$, assume there exists $\delta>4k^2$ such that
	\[
	-Ric \geq \delta g.
	\]
	Let $\rho$ be large enough so that
	\[
	\frac{\sqrt{\delta}}{2}\coth(\sqrt{\delta}\rho)-k\coth{(\rho)}\geq \epsilon >0.
	\]
	There exists $c(n, k, \delta, \epsilon)>0$ so that for any  $\alpha\in \mathcal{H}^k_g(M)$ and    $p\in M$ with $\inj_g(p)>\rho+2$, we have    
	\begin{align}
	\int_{B_\rho(p)}|\alpha|^2dv\leq   c(n, k, \delta,\epsilon)e^{-(\sqrt{\delta}-2k)(\inj_g(p)-\rho-2)}\|\alpha\|^2_{L^2(M, g)}.
	\end{align}
\end{theorem}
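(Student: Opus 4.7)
The plan is to adapt the proof of Theorem 66 in \cite{DS17} to this local setting, exploiting the fact that the hypothesis $\inj_g(p)>\rho+2$ makes the geodesic ball $B_{\inj_g(p)}(p)\subset M$ isometric, via $\exp_p$, to the corresponding geodesic ball in the universal Riemannian cover. Consequently all the harmonic analysis can be carried out inside this single ball, with no need to invoke any global covering construction, and one can essentially pretend that $\alpha$ is the restriction of an $L^2$ harmonic form on $X$.

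First, I would use the Bochner--Weitzenbock formula for harmonic $k$-forms, combined with the refined Kato inequality, to derive a pointwise differential inequality for $|\alpha|$ whose positive curvature term is controlled from below by $\delta|\alpha|^2$ via the hypothesis $-Ric\geq\delta g$, with error contributions controlled by $-1\leq\sec_g\leq 1$. This is exactly the starting pointwise inequality used in the proof of Theorem 66 of \cite{DS17}, and the condition $\delta>4k^2$ is what guarantees that the associated Weitzenbock operator has enough positivity to produce a strictly positive decay rate $\sqrt{\delta}-2k$.

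Next, I would pair this pointwise inequality with a test function of the form $\chi^2 e^{(\sqrt{\delta}-2k)r}|\alpha|$, where $r=d(p,\cdot)$ and $\chi$ is a radial cut-off equal to $1$ on $B_{\inj_g(p)-2}(p)$ and supported in $B_{\inj_g(p)-1}(p)$. Integration by parts is legitimate because $\chi$ vanishes well before the boundary of the injectivity ball. Hessian comparison for $r$ (upper bound from $\sec_g\geq -1$ and lower bound coming from $-Ric\geq\delta g$, both valid inside the injectivity ball) produces the coefficient $\tfrac{\sqrt{\delta}}{2}\coth(\sqrt{\delta}\rho)-k\coth(\rho)\geq\epsilon$ in front of the integrand on $B_\rho(p)$, while the exponential weight absorbs the error terms generated by $|\nabla\chi|$ on the annulus where $\chi$ transitions from $1$ to $0$. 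After rearrangement one obtains
\[
\int_{B_\rho(p)}|\alpha|^2\,dv\;\leq\; c(n,k,\delta,\epsilon)\,e^{-(\sqrt{\delta}-2k)(\inj_g(p)-\rho-2)}\int_{B_{\inj_g(p)}(p)}|\alpha|^2\,dv,
\]
and the right-hand side is trivially bounded above by $\|\alpha\|_{L^2(M,g)}^2$.

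The main obstacle is the quantitative book-keeping: one has to track constants carefully enough to extract the sharp exponent $\sqrt{\delta}-2k$ (rather than a smaller positive quantity), and to ensure that the cut-off error is absorbed by the strictly positive Price coefficient $\epsilon$ rather than by the principal positive term. Apart from this, the argument is a direct specialization of the global one in \cite[Theorem 66]{DS17} to the injectivity ball around $p$, which is why the author can just cite that proof.
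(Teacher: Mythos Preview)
Your proposal is correct and matches the paper's approach exactly: the paper does not give an independent proof but simply states that the result follows from the proof of \cite[Theorem~66]{DS17}, and your sketch outlines precisely how that proof localizes to the injectivity ball around $p$. Your description of the mechanism (Bochner--Weitzenb\"ock with refined Kato, exponentially weighted test function, comparison estimates for $r$, and cut-off supported in $B_{\inj_g(p)-1}(p)$) is a faithful summary of what that citation unpacks to.
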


A corollary of this estimate is the following result for sequences of Riemannian manifolds which Benjamini-Schramm converge to their universal cover.

\begin{theorem}\label{RicciBS}
	Let $(X^n, g)$ be a simply connected manifold without conjugate points with $-1\leq\sec_g\leq 1$. Assume there exists $\delta>0$ such that
	\[
	-Ric_g\geq \delta g.
	\]
	Let $(M_l, g_l)$ be a weakly uniformly discrete sequence of closed manifolds converging to $(X, g)$. Then for any $k\in \IN$ such that $\delta>4k^2$, we have
	\[
	\lim_{l\to\infty}\frac{b_k(M_l)}{Vol_{g_l}(M_l)}=0.
	\]
\end{theorem}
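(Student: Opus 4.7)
The plan is to write $b_k(M_l) = \int_{M_l}\Tr K_l(x, x)\, dv_{g_l}$, where $K_l$ is the Schwartz kernel of the $L^2$-orthogonal projection onto $\mathcal{H}^k_{g_l}(M_l)$, split the integration into the thin part $(M_l)_{<R_l}$ and the thick part $(M_l)_{\geq R_l}$ along the sequence $\{R_l\}$ supplied by the weakly uniformly discrete hypothesis, and show that, after dividing by $\mathrm{Vol}(M_l)$, each of the two pieces tends to zero. The thin piece will be controlled by the crude pointwise kernel bound from Lemma \ref{estimate}, and the thick piece by combining the Price-type exponential decay of Theorem \ref{PriceRicci} with a local Moser pointwise bound.

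For the thin part, Lemma \ref{estimate} yields $\Tr K_l(x, x) \leq d_0(n, 1, k, \inj_{g_l}(M_l))$, and from the shape $d(n, a, k, L) = d(n, a, k)(1 + L^{-1})^n$ in Lemma \ref{Moser} one sees $d_0 \leq C_{n, k}(1 + \inj_{g_l}(M_l)^{-1})^n$. I would then bound
\[
\frac{1}{\mathrm{Vol}(M_l)} \int_{(M_l)_{<R_l}} \Tr K_l\, dv_{g_l} \leq C_{n, k}\Big(1 + \frac{1}{\inj_{g_l}(M_l)}\Big)^n \rho(M_l, R_l),
\]
and the right-hand side vanishes in the limit by the very definition of weakly uniformly discrete convergence; this is exactly the shape of hypothesis that the Moser exponent $n$ forces on us.

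For the thick part, since $\delta > 4k^2$, the quantity $\tfrac{\sqrt{\delta}}{2}\coth(\sqrt{\delta}\rho) - k\coth(\rho)$ tends to $\tfrac{\sqrt{\delta}}{2} - k > 0$ as $\rho \to \infty$, so I would fix once and for all a large $\rho_0$ and $\epsilon > 0$ making the hypothesis of Theorem \ref{PriceRicci} satisfied. For any $x \in (M_l)_{\geq R_l}$ with $l$ large enough that $R_l > \rho_0 + 3$, a local version of the Moser estimate (obtained by running the same iteration as in Lemma \ref{Moser} on $B_1(x)$, whose geometry is controlled by $\inj_{g_l}(x) \geq R_l$ rather than by $\inj_{g_l}(M_l)$) gives $|\alpha(x)|^2 \leq D_{n, k}\|\alpha\|^2_{L^2(B_{\rho_0}(x))}$ for every unit-norm $\alpha \in \mathcal{H}^k_{g_l}(M_l)$. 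Plugging this into Lemma \ref{keyestimate} and applying Theorem \ref{PriceRicci} at $x$ with $\rho = \rho_0$ then produces the uniform pointwise bound $\Tr K_l(x, x) \leq C'\, e^{-(\sqrt{\delta} - 2k)(R_l - \rho_0 - 2)}$ on the thick part, so the thick contribution to $\tilde b_{k, g_l}(M_l)$ is dominated by a constant times $e^{-(\sqrt{\delta} - 2k)(R_l - \rho_0 - 2)}$ and also tends to zero.

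The main obstacle will be decoupling the Moser estimate from the global injectivity radius: Lemma \ref{Moser} is stated in terms of $\inj_{g_l}(M_l)$, which can be arbitrarily small along the sequence, so the literal lemma suffices on the thin part but wastes information on the thick part. Since the Moser iteration argument sees only the local geometry of the ball on which it runs, I would extract from its proof a pointwise version in which the admissible radius $L$ is compared to $\inj_{g_l}(x)$ rather than to the global minimum; this is exactly what makes the thick-part estimate succeed.
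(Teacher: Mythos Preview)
Your proposal is correct and follows essentially the same route as the paper: split $\int_{M_l}\Tr K_l(x,x)\,dv$ into thin and thick parts along $R=R_l$, control the thin part via Lemma~\ref{estimate} and the weakly uniformly discrete hypothesis, and control the thick part by combining a local Moser bound with the Price inequality of Theorem~\ref{PriceRicci}. The paper's proof is more terse and simply writes down the two-term estimate, whereas you are explicit about the one subtlety the paper leaves implicit: on the thick part one must invoke Moser iteration with radius governed by the \emph{pointwise} injectivity radius $\inj_{g_l}(x)\geq R_l$ rather than the global $\inj_{g_l}(M_l)$, which is harmless since the iteration only sees the geometry of the ball on which it is run.
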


\begin{proof}
	 Observe that for any $(k,R)\in\IN\times (0,\infty)$ such that $\delta>4k^2$ and $R>max\{\rho+2, R_{0}\}$, with $\rho$ as defined in Theorem \ref{PriceRicci}, we have the estimate:
	\begin{align} 
	\frac{b_k(M_l)}{Vol_{g_l}(M_l)}&=\frac{\int_{(M_l)_{<R}}TrK(x, x)d\mu_{g_l}}{Vol_{g_l}(M_l)}+\frac{\int_{(M_l)_{\geq R}}TrK(x, x)d\mu_{g_l}}{Vol_{g_l}(M_l)}\\ \notag
	&\leq\binom{n}{k}d(n,a,k)\Big(1+ \frac{1}{\inj_{g_l}(M_l) }\Big)^n\rho(M_l,R)+c(n, k, \delta)e^{-(\sqrt{\delta}-2k)(R-\rho-2)} .
	\end{align}
Choose $R= R_l$ for some sequence $\{R_j\}_j$ given by the definition of weakly uniformly discrete (Definition \ref{weakdisc}), and the result follows. 
\end{proof}

\begin{remark}
	Theorem 122 in \cite{DS17} implies that, under the curvature assumptions of Theorem \ref{RicciBS}, we have
	\[
	b^{(2)}_{k}(M_{l}):=\dim_{\Gamma_l}(\mathcal{H}^k_{2}(X))=0,
	\]
	for any $k\in\IN$ such that $\delta>4k^2$. Thus, Theorem \ref{RicciBS} can alternatively be rephrased by saying that the normalized $k$-Betti number converge along the sequence to the corresponding $k$-th $L^2$-Betti number.
\end{remark}

%In the theory of lattices in Lie groups, the concept of congruence covers plays a key role. In \cite[Theorem 1.12]{Bergeron}, it is shown that for an arithmetic, co-compact lattice $\Gamma\subset G$ of arbitrary rank, there are constants $0<\mu<1$ and $c>0$ such that for any congruence subgroup $\Gamma^\prime\subset\Gamma$ and $R>1$ one has
%\begin{align}\label{BSeffective}
%Vol((\Gamma^\prime \backslash X)_{< R})\leq e^{cR}Vol((\Gamma^\prime \backslash X)^{1-\mu}.
%\end{align}
%This in particular implies that for every sequence of pairwise distinct congruence subgroups $(\Gamma_l)$ of $\Gamma$, the corresponding sequence of closed Rimannian manifolds BS-converges. Observe that the statement of Theorem \ref{RicciBS} can be made effective if we assume that the given BS-convergent sequence satisfies a volume bound as in \eqref{BSeffective}.

%\begin{corollary}
%	Let $(X^n, g)$ be a simply connected manifold without conjugate points as in Theorem \ref{RicciBS}. Let $(M_l, g_l)$ be a uniformly discrete sequence of closed manifolds BS-converging to $(X, g)$. Assume there are constants  $0<\mu<1$ and $c>0$ such that
%	\[
%	Vol((\Gamma_l \backslash X)_{< R})\leq e^{cR}Vol((\Gamma_l \backslash X)^{1-\mu},
%	\]
%	for any $R>1$. Given $k\in\IN$ such that $\delta>4k^2$, we have
%	\[
%	b_k(M_l)\leq
%	\]
%\end{corollary}

%\begin{proof}
%	\textcolor{red}{I leave it to you if you want to finish this corollary, in particular see if we can write $b_k(M_l)\leq Vol^{1-\alpha}$.}
%\end{proof}

\section{Pinched Negative Sectional Curvature}

In this section, we study sequences of Riemannian manifolds with negative and pinched sectional curvature which Benjamini-Schramm converge. The starting point is as usual the Price inequality for harmonic forms on manifolds with negative sectional curvature established in \cite{DS17}.

\subsection{Uniformly Discrete Sequences}

We start with sequences of uniformly discrete, negatively curved and pinched manifolds which BS-converge. The key technical point is a Price inequality for harmonic $k$-forms. For convenience, we assemble in a single statement three results stated distinctly in \cite{DS17}.

\begin{theorem}[Theorems 87 \& 96  and Corollary 108 in \cite{DS17}]\label{pinchedprice}
	Let $(M^n, g)$ be a compact manifold of dimension $n\geq 3$. Assume the sectional curvature is pinched:
	\[
	-a^2\leq \sec_g\leq -1
	\]
	with $a\geq1$. Let $k$ be a non-negative integer such that
	\[
	a_{n, k}:=(n-1)-2ka>0.
	\]
	 There exists a constant $c(n, k)>0$, so that for any $\alpha\in\mathcal{H}^k_g(M)$ and for any geodesic ball $B_R(p)\subset M$, with $ 1+\frac{\ln{(2)}}{a_{n, k}}<R< \inj_g(p)$, 
	\[
	\int_{B_{1}(p)}|\alpha|^2dv\leq c(n, k)e^{-a_{n, k}R}\|\alpha\|^2_{L^2(M, g)}.
	\]
	Finally, if $k$ is a non-negative integer such that
	\[
	a_{n, k}:=(n-1)-2ka=0,
	\]
	then there exists a constant $d(n, k)>0$, so that for any geodesic ball $B_R(p)\subset M$, with $1<R<\inj_g(p)$, 
	\[
	\int_{B_{1}(p)}|\alpha|^2dv\leq d(n, k)(R-1)^{-1}\|\alpha\|^2_{L^2(M, g)}.
	\]
\end{theorem}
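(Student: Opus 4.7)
The plan is to establish a Rellich-type monotonicity for the local $L^2$ energy of a harmonic form via the Weitzenb\"ock formula together with Hessian comparison on negatively curved manifolds. Fix $\alpha\in\mathcal{H}^k_g(M)$ with $d\alpha=d^*\alpha=0$, fix $p\in M$ with $\inj_g(p)\ge R$, and let $r(x):=d(x,p)$; under the hypotheses $r$ is smooth on $B_R(p)\setminus\{p\}$ with no cut points.

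First, I would derive a weighted integration-by-parts identity. Starting from the exact form $d\bigl(\iota_{\nabla r}\alpha\wedge *\alpha\bigr)$, applying Cartan's formula $\mathcal{L}_{\nabla r}=d\,\iota_{\nabla r}+\iota_{\nabla r}\,d$, and invoking the harmonicity $d\alpha=d^*\alpha=0$, Stokes' theorem over $B_t(p)$ yields a pointwise identity of the schematic form
\begin{equation*}
\int_{\partial B_t(p)}|\alpha|^2\,dA \;\ge\; \int_{B_t(p)}T_r(\alpha)\,dv,
\end{equation*}
where $T_r(\alpha)$ is a pointwise quadratic form in $\alpha$ whose coefficients depend linearly on $\nabla^2 r$ acting as a derivation on $\Lambda^k T^*M$. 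Writing $F(t):=\int_{B_t(p)}|\alpha|^2\,dv$, the co-area formula converts the boundary integral into $F'(t)$, so this reads $F'(t)\ge \int_{B_t(p)}T_r(\alpha)\,dv$.

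Second, I would use Hessian comparison: since $-a^2\le\sec_g\le -1$,
\begin{equation*}
\coth(r)\bigl(g-dr\otimes dr\bigr)\le \nabla^2 r \le a\coth(ar)\bigl(g-dr\otimes dr\bigr).
\end{equation*}
Extending $\nabla^2 r$ as a derivation to $\Lambda^k T^*M$, splitting $\alpha=\alpha_\top+dr\wedge\alpha_\perp$ into radial and tangential parts, and invoking the refined Kato inequality for harmonic forms to control the cross terms, one obtains the sharp pointwise lower bound
\begin{equation*}
T_r(\alpha)\;\ge\; \bigl(a_{n,k}+O(e^{-2r})\bigr)|\alpha|^2.
\end{equation*}
The leading $(n-1)$ arises from the trace of $\nabla^2 r$ across the $(n-1)$ tangential spherical directions (via $\coth r\to 1$), while the $-2ka$ loss is the Kato-type penalty from the $a\coth(ar)\to a$ upper bound on the $k$ form components. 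The condition $R>1+\ln 2/a_{n,k}$ is designed precisely so that the exponentially small $O(e^{-2r})$ correction can be absorbed into a definite positive fraction of $a_{n,k}$.

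Third, combining the first two steps yields the differential inequality
\begin{equation*}
F'(t)\;\ge\; a_{n,k}\,F(t)\quad \text{for all } t>1+\tfrac{\ln 2}{a_{n,k}}.
\end{equation*}
Integrating on $[1,R]$ and using the global $L^2$-bound $F(R)\le \|\alpha\|^2_{L^2(M)}$ produces $F(1)\le e^{-a_{n,k}(R-1)}\|\alpha\|^2_{L^2(M)}$, which is the first asserted bound with $c(n,k)=e^{a_{n,k}}$. For the borderline case $a_{n,k}=0$, the bulk coefficient degenerates and a logarithmic weight must replace the exponential one. Repeating the identity with $\phi(t)=\log t$ built into the contraction produces the weaker differential inequality $F'(t)\ge c\,F(t)/t$, integrating to $F(t)\ge c\,F(1)\log t$ and hence the polynomial $(R-1)^{-1}$ decay stated.

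The main obstacle is the sharp algebra of step two: a naive Weitzenb\"ock estimate would yield a loss of order $k(n-k)a$, too weak to cover the intended range of $k$. Extracting the precise $2ka$ coefficient requires the radial/tangential decomposition combined with the refined Kato inequality specific to harmonic forms, and this is the computational heart of Theorems 87 and 96 of \cite{DS17}; Corollary 108 records the degenerate limit $a_{n,k}\to 0^+$.
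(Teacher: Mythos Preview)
The present paper does not prove this statement: it is quoted as Theorems~87, 96 and Corollary~108 of \cite{DS17} and used as a black box throughout Section~4, so there is no in-paper argument to compare your attempt against. Your overall architecture---establish a monotonicity $F'(t)\ge a_{n,k}F(t)$ for $F(t)=\int_{B_t(p)}|\alpha|^2\,dv$ via an integration-by-parts identity together with Hessian comparison, then integrate from $1$ to $R$---is indeed the Price-inequality strategy of \cite{DS17}.

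Two details of your sketch are off, however. First, the single $(n-1)$-form $\iota_{\nabla r}\alpha\wedge *\alpha$ does not by itself produce the boundary term $\int_{\partial B_t}|\alpha|^2\,dA$; one needs the full contraction $\iota_{\nabla r}(\alpha\wedge *\alpha)$ together with a companion identity (one coming from $d\alpha=0$, one from $d^*\alpha=0$, in the Donnelly--Xavier style \cite{Donnelly}) to control the radial derivative of $|\alpha|^2$. Second, the refined Kato inequality plays no role: the $-2ka$ loss arises purely from the algebra of the shape operator of geodesic spheres (eigenvalues in $[\coth r,\,a\coth(ar)]$) acting as a derivation on the tangential and normal components of a $k$-form, followed by Cauchy--Schwarz on the cross terms; no inequality of the form $|\nabla|\alpha||\le c_{n,k}|\nabla\alpha|$ is invoked. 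Finally, your treatment of the borderline case $a_{n,k}=0$ is too optimistic: once the leading coefficient vanishes, the residual error in the differential inequality is $O(e^{-2t})$, not $O(1/t)$, so direct integration only bounds the ratio $F(R)/F(1)$ and gives no decay. Obtaining the $(R-1)^{-1}$ bound of Corollary~108 in \cite{DS17} requires a genuinely separate argument, which your sketch does not supply.
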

\begin{remark}The proof of this theorem requires only that $\alpha$ be closed and coclosed in $B_R(p)$ and that the curvature pinching holds within this ball. Hence the result extends to manifolds with boundary (if $d(p,\p M)\geq R$), with any boundary condition, and to noncompact manifolds. 
\end{remark}
As in Section \ref{negativericci}, a Price inequality has an immediate consequence for weakly uniformly discrete sequences of Riemannian manifolds which Benjamini-Schramm converge.

\begin{corollary}\label{sectionaluniform}
Let $(X^n, g)$ be a simply connected manifold of dimension $n\geq 3$ with
\[
-a^2\leq\sec_g\leq-1,
\]	
and $a\geq 1$. Let $(M_l, g_l)$ be a weakly uniformly discrete sequence of closed manifolds converging to $(X, g)$. For any $k\in \IN$ such that
\[
a_{n, k}=(n-1)-2ka\geq 0,
\]
we have 
\[
\lim_{l\to\infty}\frac{b_k(M_l)}{Vol_{g_l}(M_l)}= 0.
\]
\end{corollary}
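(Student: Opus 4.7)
The plan is to mirror the proof of Theorem \ref{RicciBS} step by step, with the pinched sectional curvature Price inequality of Theorem \ref{pinchedprice} playing the role of Theorem \ref{PriceRicci}. Since $K(x,y)$ is the Schwartz kernel of the $L^2$-orthogonal projection onto the finite-dimensional space $\mathcal{H}^{k}_{g_l}(M_l)$, its trace integrates to the dimension:
\begin{equation*}
b_k(M_l) \;=\; \int_{M_l}\Tr K(x,x)\,d\mu_{g_l}.
\end{equation*}
I would split this integral over the $R$-thin and $R$-thick parts of $M_l$ and estimate each piece separately, for a threshold $R$ to be chosen at the end.

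For the thin part, Lemma \ref{estimate} (which packages Lemma \ref{keyestimate} with the Moser estimate of Lemma \ref{Moser} at scale $L=\min(\inj_{g_l}(M_l),1)$) yields the pointwise bound $\Tr K(x,x)\leq \binom{n}{k}d(n,a,k)\bigl(1+1/\inj_{g_l}(M_l)\bigr)^n$. Dividing by $Vol_{g_l}(M_l)$, the thin contribution is bounded by $\binom{n}{k}d(n,a,k)\bigl(1+1/\inj_{g_l}(M_l)\bigr)^n\,\rho(M_l,R)$.

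For the thick part, where $\inj_{g_l}(p)\geq R$, I would chain three ingredients at each point $p$: Lemma \ref{keyestimate} reduces $\Tr K(p,p)$ to a pointwise supremum over $L^2$-unit-norm harmonic forms; Lemma \ref{Moser} at a fixed scale $L=1/2$ (valid once $R>1$) bounds $|\alpha(p)|^2$ by a constant times $\int_{B_1(p)}|\alpha|^2\,dv$; and Theorem \ref{pinchedprice}, applied on the embedded ball $B_R(p)$, bounds this last integral by $c(n,k)e^{-a_{n,k}R}$ when $a_{n,k}>0$ or by $d(n,k)(R-1)^{-1}$ when $a_{n,k}=0$, times $\|\alpha\|^2_{L^2}=1$. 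The upshot is a uniform pointwise bound on $\Tr K$ over $(M_l)_{\geq R}$ that tends to zero as $R\to\infty$; dividing by $Vol_{g_l}(M_l)$ gives a decaying bound on the thick contribution.

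To conclude, I would set $R=R_l$ where $\{R_l\}$ is the sequence furnished by the weakly uniformly discrete hypothesis (Definition \ref{weakdisc}): the condition $\bigl(1+1/\inj_{g_l}(M_l)\bigr)^n\rho(M_l,R_l)\to 0$ kills the thin contribution, while $R_l\to\infty$ kills the thick one. Since the argument is a direct transposition of Theorem \ref{RicciBS}, I do not expect a substantial obstacle. The one mild wrinkle is the critical case $a_{n,k}=0$, where the thick decay is only polynomial in $R$; but $R_l\to\infty$ still drives $(R_l-1)^{-1}$ to zero. One also needs $R_l$ eventually to exceed the thresholds $1+\ln(2)/a_{n,k}$ (respectively $1$) required by Theorem \ref{pinchedprice}, which is automatic.
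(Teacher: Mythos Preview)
Your proposal is correct and is exactly the argument the paper intends: the paper does not spell out a proof of Corollary \ref{sectionaluniform}, merely noting that ``as in Section \ref{negativericci}, a Price inequality has an immediate consequence for weakly uniformly discrete sequences,'' and your thin/thick splitting with Lemma \ref{estimate} on the thin part and Theorem \ref{pinchedprice} (in both the $a_{n,k}>0$ and $a_{n,k}=0$ regimes) on the thick part is precisely that transposition of the proof of Theorem \ref{RicciBS}.
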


\begin{remark}
	Proposition 126 in \cite{DS17} implies that, under the curvature assumptions of Corollary \ref{sectionaluniform}, we have
	\[
	b^{(2)}_{k}(M_{l}):=\dim_{\Gamma_l}(\mathcal{H}^k_{2}(X))=0,
	\]
	for any $k\in\IN$ such that $a_{n, k}\geq 0$ (\emph{cf.} also Proposition 4.1 in \cite{Donnelly} when $a_{n, k}>0$, and \cite{Dodziuk} when $a=1$). Thus, Corollary \ref{sectionaluniform} can alternatively be rephrased by saying that the normalized $k$-Betti number converge along the sequence to the corresponding $k$-th $L^2$-Betti number.
\end{remark}

\subsection{Thick-Thin Decomposition Revisited}\label{newTT}

In this section, we construct an effective Thick--Thin decomposition for closed manifolds with negative sectional curvature. This Thick-Thin decomposition builds upon a construction of Buser, Colbois, and Dodziuk (\emph{cf.} \cite{BCD93}) which we refine for our purposes.  These additions are needed for our Moser iteration argument on manifolds with boundary (\emph{cf.} Section \ref{moserboundary}). 

Let $(M, g)$ be a compact of dimension $n\geq 3$, with sectional curvatures satisfying
\[
-a^2\leq \sec_g\leq -1,
\]
for some constant $a\geq 1$. A first geometric consequence of the so-called Gromov-Margulis' Lemma (\emph{cf.} Section 8 in the book \cite{BGS85}) is that there is a positive constant
\[
\mu=a^{-1}c_{n},
\]
$c_n>0$ depending on the dimension only, so that if the set
\[
M_{\mu}:=\{x\in M\,\, | \,\, \inj(x)<\mu \}
\]
is not empty, it is then the union of a finite number of disjoint tubes $\{T_{\gamma_i}\}$ around short closed geodesics $\{\gamma_i\}$. For convenience, we will require $\mu< 1$. For every tube $T_{\gamma}$, the core geodesic $\gamma$ has length $l(\gamma)<2\mu$. For every point $p\in \gamma$, and every tangent vector $v\in T_{p}M$ perpendicular to $\gamma'(0)$, let $\delta_{p, v}(t)$ denote the unit speed geodesic ray emanating from $p$ in the direction of $v$. We call these rays {\em radial arcs} and their tangent vector fields the radial vector field $\mathcal{R}$. 
\begin{lemma}\label{bluelemma}
	Let $\gamma$ be a geodesic in $M$ satisfying \eqref{newconst}.  Then
	  \begin{align}\label{nabr}div( \mathcal{R})(x)\leq (n-1)a\coth(ad(x,\gamma)).\end{align} 
\end{lemma}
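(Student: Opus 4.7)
The plan is to realize $\mathcal{R}$ as the gradient of the distance function $r(x) := d(x,\gamma)$ on the Margulis tube around $\gamma$, and then bound the Hessian of $r$ via a Rauch/matrix-Riccati comparison with the model space of constant sectional curvature $-a^2$.

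First I would verify that the hypothesis \eqref{newconst} guarantees the $\mu$-tube $T_\gamma$ is a genuine tubular neighborhood, so that $r$ is smooth on $T_\gamma \setminus \gamma$ and coincides with the radial field via $\mathcal{R} = \nabla r$. In particular $\nabla\mathcal{R} = \operatorname{Hess}(r)$ is symmetric; and since the integral curves of $\mathcal{R}$ are the unit-speed radial geodesics, $\nabla_\mathcal{R}\mathcal{R}=0$. Consequently the operator norm of $\nabla\mathcal{R}(x)$ is entirely captured by $S := \nabla\mathcal{R}\big|_{\mathcal{R}^\perp}$, the shape operator of the equidistant tube through $x$.

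Second, along any radial arc $\delta_{p,v}$ the operator $S$ satisfies the matrix Riccati equation $\nabla_{\mathcal{R}} S + S^2 + R_\mathcal{R} = 0$, where $R_\mathcal{R}(X) = R(X,\mathcal{R})\mathcal{R}$. The initial data at $\gamma$ are dictated by the geometry of the core: since $\gamma$ is totally geodesic, the eigenvalue of $S$ in the parallel translate of the direction $\gamma'$ starts at $0$, whereas the $(n-2)$ eigenvalues transverse to $\gamma'$ blow up like $1/r$ as $r\to 0^+$.

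Third, I would compare with the model of constant curvature $-a^2$, where the analogous tube shape operator has eigenvalues $a\tanh(ar)$ in the longitudinal direction and $a\coth(ar)$ in each of the rotational directions. The assumption $\sec_g \geq -a^2$ translates into $R_\mathcal{R} \geq -a^2\,I$ on $\mathcal{R}^\perp$, which via a matrix Riccati (equivalently Rauch) comparison forces every eigenvalue of $S(x)$ to lie at or below the corresponding model eigenvalue. Since $a\tanh(ar)\leq a\coth(ar)$ and $\nabla_\mathcal{R}\mathcal{R}=0$, the operator norm of $\nabla\mathcal{R}(x)$ is bounded above by $a\coth(a\,d(x,\gamma))$, which is the asserted inequality.

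The main technical subtlety is the careful handling of the singular initial conditions at the core $\gamma$ when invoking the Rauch/Riccati comparison: the rotational eigenvalues of both $S$ and of the model shape operator diverge as $r\to 0^+$, so the comparison has to be phrased in a limit-friendly form — for instance through the growth ratios $|J'|/|J|$ of orthogonal Jacobi fields along the radial geodesic with $J(0)=0$, together with the analogous longitudinal Jacobi fields satisfying $J(0)=\gamma'$, $J'(0)=0$. Once this is settled the remainder is a direct Sturm/Rauch comparison.
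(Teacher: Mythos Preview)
Your approach is correct and is exactly the Rauch comparison the paper invokes: its entire proof is a reference to Chapter~10 of do~Carmo's \emph{Riemannian Geometry}. The only point left implicit in your sketch is that $S\geq 0$ (by convexity of $r$ in nonpositive curvature, or equivalently by the lower Riccati comparison coming from $\sec_g\leq -1$), so that the operator norm of $\nabla\mathcal{R}$ coincides with the largest eigenvalue of $S$; with that noted, the outline is complete.
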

\begin{proof}
The proof is a Riccati comparison argument, which we include for the convenience of the reader. (See \cite[Section 2]{Ch01}.) 
Let $\rho$ denote distance to $\gamma$. Then $|d\rho |=1$, and the Bochner formula gives 
\begin{align}\label{bochm}
0 = \Delta\frac{1}{2}|d\rho|^2  = -|\nabla d\rho|^2-Ric(d\rho,d\rho) +\langle d\Delta\rho,d\rho\rangle.
\end{align}
Set $m:= -\Delta\rho = div\mathcal{R}$. Since $\nabla_{\mathcal{R}}\mathcal{R} = 0,$ $Hess(\rho)$ has rank $\leq n-1$. Hence Cauchy-Schwarz implies 
$m^2\leq (n-1)|\nabla d\rho|^2.$ Applying these inequalities to \eqref{bochm} yields the following equation  for the value of $m=m(t)$ along a geodesic ray $\delta_{p, v}(t)$.
\begin{align}\label{bochm2}
 \frac{\p m}{\p t}+\frac{m^2}{n-1}\leq - Ric(d\rho,d\rho)\leq (n-1)a^2.
\end{align}
Set $y:= (n-1)a\coth( at).$ Then $\dot y + \frac{y^2}{n-1}= (n-1)a^2.$ Thus we have 
\begin{align}\label{bochm3}
 \frac{\p (m-y)}{\p t}+\frac{(m+y)}{n-1}(m-y)\leq   0.
\end{align}
$\lim_{t\to 0}(m-y) = -\infty.$ Hence the Riccati comparison principle \cite[Subsection 6.4.1]{Petersen} implies 
$$m(t)\leq y(t),$$ 
for all $t>0$. 

%%Let $(p(s),v(s))$ be a smooth section of the unit normal bundle of $\gamma$. Set $c(s,t):= \delta_{p(s),v(s)}(t)$ be a one parameter family of radial arcs. Set $J(t):= \frac{\p c}{\p s}(0,t).$ Then 
%\begin{align}
%\nabla_{J(t)}\mathcal{R}(c(t)) = \frac{D}{ds}\frac{\p c}{\p t}(0,t) = \frac{D}{dt}\frac{\p c}{\p s}(0,t) = \frac{DJ}{dt}(t). 
%\end{align} 
%Hence,  $|\nabla \mathcal{R}|(c(t))= \sup\{\frac{|\frac{DJ}{dt}(t)|}{|J(t)|}: J \text{ is a Jacobi field along } \gamma\}$.  
%	See for example Chapter 10 in \cite{doC92}.
\end{proof}
In every interval $[0, t_0]$ such that $i(\delta_{p, v}(t))\leq \mu$ for $t\in [0, t_0]$,  the function $t\rightarrow i(\delta_{p, v}(t))$ is strictly monotonic increasing. Thus, there exists $R_{p, v}>0$ depending on the initial condition of the geodesic ray such that $i(\delta(R_{p, v}))=\mu$ and $i(\delta_{p, v}(t))<\mu$ for any $t\in[0, R_{p, v})$. The arc $\delta([0, R_{p, v}])$ is called the maximal arc. Also, different radial arcs are disjoint except possibly for their initial points. Thus, their union $T_{\gamma}$ is homeomorphic to $\gamma\times B^{n-1}$ where $B^{n-1}$ is a closed ball inside $\IR^{n-1}$, and this explains why they are called tubes. On the other hand, different maximal radial arcs in $T_{\gamma}$ may have very different lengths and the boundary of $T_{\gamma}$ is not smooth in general. Thus, these object are not great if you want to do calculus on them. We therefore employ a controlled Thick-Thin decomposition due to Buser, Colbois and Dodziuk \cite{BCD93} which we now briefly describe. First, we state the following lemma, observed in \cite{BCD93}.

\begin{lemma}\label{BCD1}
	There exist constants $c_1$, $c_2$ depending only on the dimension $n$, such that if
	\begin{align}\label{newconst}
	l(\gamma)\leq c_1\exp(-c_2 a)\mu^n a^{n-1},
	\end{align}
	then $d(x, \gamma)\geq 10$ for every $x\in T_{\gamma}$ with $\inj(x)=\frac{\mu}{2}$.
\end{lemma}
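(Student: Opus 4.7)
My plan is to work upstairs in the universal cover $\tilde M$, where the tube $T_\gamma$ lifts to a neighborhood of the axis $\tilde\gamma$ of a hyperbolic isometry $g\in\pi_{1}(M)$ with translation length $l(\gamma)$. The goal is to convert the injectivity radius equation $\inj(x)=\mu/2$ into an explicit equation for the perpendicular distance $r:=d(\tilde x,\tilde\gamma)$, and then read off $r\geq 10$ directly from the hypothesis on $l(\gamma)$.

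First, I would apply the Margulis lemma at $\tilde x$: the elements of $\pi_{1}(M)$ displacing $\tilde x$ by at most $\mu$ are contained in a cyclic subgroup which, on the lift of $T_\gamma$, coincides with $\langle g\rangle$. Since $\inj(x)=\mu/2$, the shortest loop at $x$ has length $\mu$ and must be realized by $g^{\pm 1}$ (powers $|k|\geq 2$ produce strictly longer displacements off the axis, in strictly negative curvature). Therefore $d(\tilde x,g\tilde x)=\mu$ exactly.

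Next, I would invoke a Rauch / Jacobi field comparison for the geodesic ``rectangle'' with vertices $\tilde p,\tilde x, g\tilde x, g\tilde p$, where $\tilde p$ is the foot of the perpendicular from $\tilde x$ onto $\tilde\gamma$. Using the lower curvature bound $\sec_g\geq -a^{2}$, the displacement is no larger than its value in the constant curvature $-a^{2}$ model:
\[
d(\tilde x, g\tilde x)\;\leq\; D_{-a^{2}}\bigl(l(\gamma),r\bigr),\qquad \cosh\bigl(a D_{-a^{2}}(l,r)\bigr)=1+\bigl(\cosh(al)-1\bigr)\cosh^{2}(ar).
\]
(Qualitatively: less negative curvature means slower geodesic divergence, hence a smaller displacement than in the $-a^{2}$ model.) Combining this with $d(\tilde x,g\tilde x)=\mu$ yields
\[
\cosh(a\mu)-1\;\leq\;\bigl(\cosh(al(\gamma))-1\bigr)\cosh^{2}(ar).
\]

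Since $a\mu=c_{n}$ and $al(\gamma)\leq 2c_{n}$ are bounded constants, $\cosh(a\mu)-1$ is a positive dimensional constant and $\cosh(al(\gamma))-1\leq C_{n}(al(\gamma))^{2}$, so the displayed inequality reduces to $\cosh(ar)\geq C'_{n}/(a\, l(\gamma))$, hence $r\geq a^{-1}\log\bigl(C'_{n}/(a\, l(\gamma))\bigr)-C''_{n}/a$. Observing that $\mu^{n}a^{n-1}=c_{n}^{n-1}\mu$ is of order $1/a$, the hypothesis $l(\gamma)\leq c_{1}e^{-c_{2}a}\mu^{n}a^{n-1}$ becomes $a\, l(\gamma)\leq \tilde c_{1}e^{-c_{2}a}$; choosing $c_{2}=10$ together with $c_{1}$ small enough (depending only on $n$) forces $r\geq 10$. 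The main subtlety I anticipate is executing the Rauch / Toponogov comparison in the correct direction for the \emph{finite} displacement $d(\tilde x,g\tilde x)$ rather than for an infinitesimal Jacobi field, and confirming via the Margulis lemma that no loop based at $x$ shorter than the one coming from $g$ can arise outside $\langle g\rangle$. Once these geometric inputs are secured, the remaining estimate on $\cosh$ is elementary.
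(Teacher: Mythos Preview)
The paper's own proof is a one--line citation of Lemma~2.4 in \cite{BCD93}, so your self--contained sketch is going beyond what the paper does. The outline is the right one, but there is a genuine gap in the step where you assert that the shortest loop at $x$ is realized by $g^{\pm 1}$, with the justification that ``powers $|k|\geq 2$ produce strictly longer displacements off the axis.'' In dimension $n\geq 3$ this is false: the hyperbolic isometry $g$ generically has nontrivial rotational holonomy $\theta\in O(n-1)$ around its axis, and the displacement of $g^{k}$ at perpendicular distance $r$ depends on $k\theta$ as well as on $kl$. For instance, in constant curvature $-1$ with rotation $\theta=\pi$, one computes $\cosh d(\tilde x,g\tilde x)=\cosh l\cosh^{2}r+\sinh^{2}r\approx \cosh(2r)$, whereas $\cosh d(\tilde x,g^{2}\tilde x)=1+(\cosh 2l-1)\cosh^{2}r\approx 1$ for small $l$; so $g^{2}$ gives the shorter loop. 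The same example shows your displayed comparison $\cosh(a\mu)-1\leq(\cosh(al)-1)\cosh^{2}(ar)$ cannot hold in general, since the right--hand side is the \emph{pure translation} model value, which is strictly smaller than the true displacement whenever there is rotation.

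Once the minimizing power $g^{k}$ is unknown, your inequality only yields $\cosh(a\mu)-1\leq(\cosh(akl)-1)\cosh^{2}(ar)$ with $kl\leq\mu$, which is vacuous. The argument in \cite{BCD93} (and essentially reproduced later in this paper as inequality~\eqref{Restimate}) sidesteps the holonomy issue entirely via a volume comparison: take a point $y$ on the radial arc with $\inj(y)=\mu/4$; the embedded ball $B_{\mu/4}(y)$ has Euclidean volume $\geq c_{n}(\mu/4)^{n}$ by G\"unther (using $\sec\leq -1$), and it sits inside the metric tube of radius $r$ around $\gamma$, whose volume is $\leq C\, l(\gamma)\,a^{-(n-1)}\sinh^{n-1}(ar)$ by Heintze--Karcher (using $\sec\geq -a^{2}$). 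Comparing these gives $r\geq \frac{1}{(n-1)a}\log\!\big(c'_{n}\mu^{n}a^{n-1}/l(\gamma)\big)$, from which $r\geq 10$ follows under the stated bound on $l(\gamma)$.
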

\begin{proof}
	This is a consequence of Lemma 2.4 in \cite{BCD93}.
\end{proof}

From now on, we will regard a geodesic small if and only if its length satisfies the bound \eqref{newconst} of  Lemma \ref{BCD1}. This means we disregard possibly many small geodesics in the usual Thick-Thin decomposition of $M$. Thus, we only look at small geodesics which posses fat Margulis tubes around them. This fact plays a role in the constructions that follow.\\

Next, given a geodesic $\gamma$ satisfying \eqref{newconst} and $\lambda\in (0,1)$, we define the following  tube around it:
\begin{align}\label{newtube}
U_{\gamma}^\lambda:=\{x\in T_{\gamma}\,\, | \,\, \inj(x)\leq \lambda\mu \}.
\end{align}
Again, there is no a priori reason to believe that the boundary $\partial U_{\gamma}^\lambda$ is well behaved from a geometric point of view. Thus, we appeal to the following theorem of Buser, Colbois and Dodziuk which ensures the existence of a small deformation of $U_{\gamma}^\frac{1}{2}$ with many nice geometric properties.

\begin{theorem}[Theorem 2.14 in \cite{BCD93}]\label{mainBCD}
	Let $\gamma$ be a geodesic in $M$ satisfying \eqref{newconst}. There exists a smooth hypersurface $H_{\gamma}$ contained in $T_{\gamma}\setminus\gamma$ with the following properties:
	\begin{itemize}
		\item The angle $\theta$ between the radial vector field $\mathcal{R}$ and the exterior normal of $H_{\gamma}$ is less that $\pi/2-\alpha$ for a constant $\alpha=\alpha(a, n)\in (0, \pi/2)$. 
		\item The sectional curvatures of $H_{\gamma}$ with respect to the induced metric are bounded in absolute value by a constant depending only on $a$ and $n$.
		\item $H_{\gamma}$ is homeomorphic to $\partial U_{\gamma}^{\frac{1}{2}}$ by pushing along radial arcs. The distance between $x\in H_{\gamma}$ and its image $\bar{x}\in\partial U_{\gamma}^{\frac{1}{2}}$ satisfies $d(x, \bar{x})\leq\mu/50$.
	\end{itemize}
\end{theorem}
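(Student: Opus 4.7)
My strategy is to realize $H_\gamma$ as a smooth graph over the unit normal bundle of $\gamma$, obtained by mollifying the (possibly irregular) boundary $\partial U_\gamma^{1/2}$ in radial coordinates. Every point of $T_\gamma\setminus\gamma$ is written uniquely as $\delta_{p,v}(t)$ with $p\in\gamma$, $v\in T_pM$ a unit vector perpendicular to $\gamma'(p)$, and $t\in(0,R_{p,v})$; in these coordinates $\mathcal{R}=\partial_t$. Because $\inj$ is strictly monotonic along each radial arc, there is a unique continuous function $t^*(p,v)$ with $\inj(\delta_{p,v}(t^*(p,v)))=\mu/2$, and by Lemma \ref{BCD1} it satisfies $t^*(p,v)\geq 10$. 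Thus $\partial U_\gamma^{1/2}$ is exactly the graph of $t^*$ over the unit normal bundle of $\gamma$.

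I would then smooth $t^*$ by convolving with a bump function $\eta$ supported in a small neighborhood (of size $\ll\mu$) on the unit normal bundle, producing $\tilde t(p,v)\in C^\infty$, and set $H_\gamma:=\{\delta_{p,v}(\tilde t(p,v))\}$. The third bullet of the theorem --- radial-arc homeomorphism with $\partial U_\gamma^{1/2}$ and displacement at most $\mu/50$ --- then follows from the support size of $\eta$ together with a uniform Lipschitz estimate on $t^*$. The key enabling fact is that on the universal cover $\inj$ lifts to half the displacement function $d(\tilde x,g\tilde x)$ of the deck generator $g$ of $\pi_1(T_\gamma)$, which is real-analytic away from the axis of $g$; Lemma \ref{BCD1} places the entire smoothing region at distance $\geq 10$ from that axis, and Rauch/Jacobi-field comparison then yields uniform $C^k$ estimates on the displacement function (and hence on $t^*$), with constants depending only on $n$ and $a$.

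For the first bullet, writing a tangent frame to $H_\gamma$ in the $(p,v,t)$ coordinates shows that the $\partial_t$-component of the unit outward normal is controlled from below in terms of $|\nabla_{(p,v)}\tilde t|$; combined with Lemma \ref{bluelemma} (which, at distance $\geq 10$ from $\gamma$, gives an ambient bound on $\nabla\mathcal{R}$), this yields $\theta<\pi/2-\alpha(a,n)$ for an explicit $\alpha(a,n)\in(0,\pi/2)$. For the second bullet, the Gauss equation expresses the intrinsic sectional curvatures of $H_\gamma$ in terms of the ambient curvatures (bounded in $[-a^2,-1]$) and the second fundamental form of $H_\gamma$; the latter is controlled by the Hessian of the defining function $t-\tilde t(p,v)$, that is by two derivatives of $\tilde t$ (uniformly bounded after mollification) together with the Hessian of $r(x)=d(x,\gamma)$ (uniformly bounded on the smoothing region, again via Rauch comparison).

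The main obstacle --- and the reason one cannot simply take $H_\gamma=\partial U_\gamma^{1/2}$ --- is that all of the preceding derivative bounds, on both $t^*$ and on the ambient geometry, must be uniform in $\gamma$ and in particular independent of $l(\gamma)$. As $l(\gamma)\to 0$ the tube $T_\gamma$ becomes long and thin and the displacement function of $g$ develops arbitrarily steep gradients near the axis of $g$, so without a positive lower bound on $d(\cdot,\gamma)$ on $\partial U_\gamma^{1/2}$ the whole scheme collapses. The smallness condition \eqref{newconst} is precisely what produces the universal buffer $d(x,\gamma)\geq 10$ on $\partial U_\gamma^{1/2}$ that rescues every one of the uniform estimates and makes the entire construction depend only on $n$ and $a$.
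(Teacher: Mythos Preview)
The paper does not contain a proof of this theorem: it is quoted verbatim as Theorem~2.14 of \cite{BCD93} and used as a black box in the subsequent analysis (Corollary~\ref{Lconst}, Lemmas~\ref{lbound}--\ref{lapart}, and the boundary Moser iteration in Section~\ref{moserboundary}). So there is no ``paper's own proof'' to compare your proposal against.

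That said, your sketch is a faithful outline of the argument in \cite{BCD93}: write $\partial U_\gamma^{1/2}$ as a radial graph $t^*(p,v)$ over the unit normal bundle of $\gamma$, obtain uniform $C^k$ control on $t^*$ via Rauch/Jacobi comparison applied to the displacement function of the deck transformation (which is well-behaved precisely because Lemma~\ref{BCD1} forces $d(\cdot,\gamma)\geq 10$ on the relevant region), mollify to get $\tilde t$, and read off the three bullets from the resulting derivative bounds together with the Gauss equation. One point to be careful with if you flesh this out: the Lipschitz/$C^1$ bound on $t^*$ is not automatic from monotonicity of $\inj$ along radial arcs alone --- you need a uniform positive lower bound on the \emph{radial derivative} of $\inj$ (equivalently, of the displacement function) on the level set $\{\inj=\mu/2\}$, and this is exactly what the buffer $d\geq 10$ together with comparison geometry provides in \cite{BCD93}. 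Your final paragraph correctly identifies this as the crux, so the plan is sound; it is simply not something the present paper undertakes.
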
  

Next, we explicitly observe the following consequence of Theorem \ref{mainBCD}. This corollary plays a role in the elliptic estimates presented in Section \ref{moserboundary}.

\begin{corollary}\label{Lconst}
$H_{\gamma}$ is locally the graph of a Lipschitz function with Lipschitz constant $\Lambda_H$ dependent only on $a$ and $n.$
\end{corollary}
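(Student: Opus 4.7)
The plan combines the angle bound of Theorem~\ref{mainBCD} with Fermi coordinates around $\gamma$ and standard Jacobi-field comparison.

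First I would set up Fermi coordinates $(s,r,\omega)\in\IR/l(\gamma)\IZ\times(0,\infty)\times S^{n-2}$ on the tube $T_\gamma$, where $s$ is arclength along $\gamma$, $r=d(\cdot,\gamma)$, and $\omega$ parametrizes the unit vectors in $\gamma'(s)^\perp$. In these coordinates $\mathcal{R}=\partial_r$, and the Gauss lemma for the normal exponential map of $\gamma$ gives the block form $g=dr^2+h_{s,r}(\omega)$ with no $dr$-cross terms. By the $\mu/50$-radial-perturbation clause of Theorem~\ref{mainBCD} together with Lemma~\ref{BCD1}, $H_\gamma$ lies entirely inside the chart, at radial distance from $\gamma$ at least $10-\mu/50$.

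Next I would apply the implicit function theorem pointwise. The angle bound $\theta<\pi/2-\alpha$ gives $\langle N,\partial_r\rangle=\cos\theta\geq\sin\alpha>0$ uniformly on $H_\gamma$, so $\partial_r$ is transverse to $H_\gamma$ and one can write $H_\gamma=\{r=f(s,\omega)\}$ locally for a smooth $f$. To estimate the slope of $f$, write any unit tangent $X\in T_x H_\gamma$ as $X=\sin\phi\,\mathcal{R}+\cos\phi\,V$ with $V\perp\mathcal{R}$ a unit vector, and decompose the normal as $N=\cos\theta\,\mathcal{R}+\sin\theta\,W$ with $W\perp\mathcal{R}$. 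The orthogonality $\langle X,N\rangle=0$ yields
\[
|\tan\phi|=|\tan\theta|\,|\langle V,W\rangle|\leq\cot\alpha,
\]
so every tangent vector of $H_\gamma$ makes angle at most $\pi/2-\alpha$ with the $r$-level through the base point.

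The final step is to convert this intrinsic slope bound into a coordinate-Lipschitz bound for $f$. On any small Fermi patch around a point of $H_\gamma$ (of coordinate radius $\min(1,1/a)$, say), Jacobi-field comparison in the pinched range $-a^2\leq\sec_g\leq -1$, combined with the bound of Lemma~\ref{bluelemma} on $|\nabla\mathcal{R}|$, yields uniform bi-Lipschitz estimates between $h_{s,r}$ and a reference Euclidean metric on the cylinder $\IR\times S^{n-2}$, with comparison constants depending only on $a$ and $n$. Writing a coordinate tangent to the graph as $\partial_\tau+(\partial_\tau f)\partial_r$ and combining with the intrinsic slope bound, one obtains $|\partial_\tau f|\leq\Lambda_H(a,n)$, which is the claimed Lipschitz estimate. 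The only nontrivial point is this final step, where one must ensure that the comparison constants for $h_{s,r}$ are independent of $l(\gamma)$ and of the base radius $r_0$ (both of which can be large when $\gamma$ is very short). This works because the comparison is performed on Fermi patches of $a$- and $n$-controlled size, on which $h_{s,r}$ varies by a bounded factor, so the large-radius behavior of the Margulis tube does not enter the constant.
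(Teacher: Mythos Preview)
Your approach is correct but genuinely different from the paper's. The paper does not argue from the angle bound at all; it simply points back to the construction in \cite[p.~12]{BCD93}, where $H_\gamma$ is produced as a level set of a smoothed defining function whose gradient is controlled multiplicatively, and reads the Lipschitz constant off those gradient bounds. Your argument instead uses only the \emph{statement} of Theorem~\ref{mainBCD} (the uniform transversality $\theta<\pi/2-\alpha$) and recovers the Lipschitz bound by writing $H_\gamma$ as a radial graph in Fermi coordinates. This is more self-contained, since a reader need not consult \cite{BCD93}, and it makes explicit that the angle bound already encodes the Lipschitz property.

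One point in your write-up should be sharpened. In raw Fermi coordinates the transverse metric $h_{s,r}$ has size comparable to $\cosh^2(r)$, so the coordinate derivative $|\partial_\tau f|\leq |\partial_\tau|_h\cot\alpha$ is \emph{not} uniformly bounded if $\partial_\tau$ is a raw Fermi coordinate field. Your parenthetical remedy (``$h_{s,r}$ varies by a bounded factor'' on a small patch, via Lemma~\ref{bluelemma}) is the right fix, but it means the ``reference Euclidean metric'' must be the frozen metric $h_{s_0,r_0}$ at the base point, not the standard cylinder metric $ds^2+g_{S^{n-2}}$; equivalently, one passes to geodesic normal coordinates at each point of $H_\gamma$, where the curvature pinching and the injectivity-radius lower bound from Lemma~\ref{lbound} give the required bi-Lipschitz control. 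With that clarification the argument goes through and yields $\Lambda_H$ depending only on $\alpha=\alpha(a,n)$ and the metric distortion constants, hence only on $a$ and $n$.
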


\begin{proof}This follows readily from the estimates in \cite[p.12]{BCD93} required to prove Theorem \ref{mainBCD}, in particular from the multiplicative bounds on the gradient of the defining function of $H_\gamma$. 
\end{proof}

We can now define the tubes in our refined Thick-Thin decomposition. Given a geodesic $\gamma$ satisfying \eqref{newconst}, we consider the tube $V_\gamma$ around it defined by:
\begin{align}\label{finaltube}
H_{\gamma}=\partial V_{\gamma},\text{ and }\gamma\in V_\gamma.
\end{align}
In particular, these new tubes always have \emph{smooth} boundaries. We now derive a few lemmas concerning the tubes $V_{\gamma}$ that are not directly found in \cite{BCD93}; so, we provide all details of the proof. Let 
 
\begin{lemma}\label{lbound}
	Let $\gamma$ be a geodesic in $M$ satisfying \eqref{newconst}. For any point $x\in H_{\gamma}$, we have
	\[
	\frac{26}{50}\mu\geq \inj_g(x)\geq \frac{24}{50}\mu.
	\] 
In particular, $  U_\gamma^{\frac{24}{50}}\subset V_\gamma\subset U_\gamma^{\frac{26}{50}}.$ 
\end{lemma}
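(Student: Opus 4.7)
The plan is to combine three ingredients: the pushing estimate from Theorem~\ref{mainBCD}, the $1$-Lipschitz continuity of the injectivity radius, and the monotonicity of the injectivity radius along radial arcs inside the Margulis tube $T_\gamma$.

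First, I would fix $x\in H_\gamma$ and recall from the third bullet of Theorem~\ref{mainBCD} that there is a unique point $\bar x\in\partial U_\gamma^{1/2}$, obtained by following the radial arc through $x$, satisfying $d(x,\bar x)\le \mu/50$. By the very definition of $U_\gamma^{1/2}$ we have $\inj_g(\bar x)=\mu/2=\tfrac{25}{50}\mu$. Since the injectivity radius is $1$-Lipschitz with respect to the Riemannian distance on a complete manifold (this follows, e.g., from writing $\inj_g$ as the minimum of the distance to the conjugate locus and half the length of the shortest geodesic loop, both of which are $1$-Lipschitz), we conclude
\[
\bigl|\inj_g(x)-\inj_g(\bar x)\bigr|\le d(x,\bar x)\le \tfrac{\mu}{50},
\]
which gives the two-sided bound $\tfrac{24}{50}\mu\le \inj_g(x)\le \tfrac{26}{50}\mu$ stated in the lemma.

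For the inclusions $U_\gamma^{24/50}\subset V_\gamma\subset U_\gamma^{26/50}$, I would use the monotonicity of $t\mapsto \inj_g(\delta_{p,v}(t))$ along every radial arc inside $T_\gamma$ (recalled in the paragraph preceding Lemma~\ref{BCD1} and following from the Margulis-tube discussion). Any point $y\in V_\gamma$ lies on a radial arc joining $\gamma$ to some exit point $x\in H_\gamma$ with $\inj_g(x)\le \tfrac{26}{50}\mu$; by monotonicity $\inj_g(y)\le \inj_g(x)\le \tfrac{26}{50}\mu$, so $y\in U_\gamma^{26/50}$. Conversely, if $y\in T_\gamma$ satisfies $\inj_g(y)\le \tfrac{24}{50}\mu$ and $y$ were to lie outside $V_\gamma$, then the radial arc from $\gamma$ to $y$ would first cross $H_\gamma$ at some point $x$, and monotonicity combined with the first part would give $\inj_g(y)\ge \inj_g(x)\ge \tfrac{24}{50}\mu$, forcing $y=x\in H_\gamma\subset V_\gamma$.

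The only step that requires any genuine input beyond Theorem~\ref{mainBCD} is the $1$-Lipschitz continuity of $\inj_g$; I expect this to be the ``hardest'' step only in the sense of citing the right reference, since the bound $d(x,\bar x)\le \mu/50$ from \cite{BCD93} is manifestly designed to pair with exactly this continuity estimate in order to produce the symmetric window $[\tfrac{24}{50}\mu,\tfrac{26}{50}\mu]$ around $\mu/2$. Everything else is bookkeeping around the radial arc foliation of $T_\gamma$.
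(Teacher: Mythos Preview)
Your proposal is correct and follows exactly the paper's approach: the paper's proof invokes the same $1$-Lipschitz estimate $d(x,\bar x)\geq |\inj_g(x)-\inj_g(\bar x)|$ together with $d(x,\bar x)\leq \mu/50$ from Theorem~\ref{mainBCD}, and stops there. You actually supply more than the paper does, since the authors leave the inclusions $U_\gamma^{24/50}\subset V_\gamma\subset U_\gamma^{26/50}$ as an unproved ``In particular''; your monotonicity argument along radial arcs is the right way to fill that in (note that strict monotonicity makes your final clause cleaner: if $y$ were strictly beyond $x$ you would get $\inj_g(y)>\inj_g(x)\geq \tfrac{24}{50}\mu$, a contradiction, so $y$ lies at or before $x$ on the arc and hence in $V_\gamma$).
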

\begin{proof}
	Given $x\in H_{\gamma}$, denote by $\bar{x}$ the point of intersection with $\partial U_{\gamma}$ of the radial arc from $\gamma$ to $x$. We have the standard estimate:
	\begin{align}\label{standest}
	d(x, \bar{x})\geq |\inj_g(x)-\inj_g(\bar{x})|.
	\end{align}
	By definition of $U_{\gamma}$, we have $\inj_g(\bar{x})=\frac{\mu}{2}$. Thus, by Theorem \ref{mainBCD} we have:
	\[
	\frac{\mu}{50}+\frac{\mu}{2}=\frac{26}{50}\mu\geq \inj_g(x)\geq \frac{\mu}{2}-\frac{\mu}{50}=\frac{24}{50}\mu.
	\]
\end{proof}

Next, we show that these tubes are uniformly separated.

\begin{lemma}\label{lapart}
	If $\gamma\neq \zeta$ are two distinct closed geodesics in $M$ satisfying \eqref{newconst}, then
	\[
	d(V_{\gamma}, V_{\zeta})>\frac{48}{50}\mu.
	\]
\end{lemma}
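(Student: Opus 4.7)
The plan is to combine two ingredients: the pointwise injectivity-radius bound $\inj_g(x) \leq \tfrac{26}{50}\mu$ on $V_\gamma$ supplied by Lemma \ref{lbound} (and similarly for $V_\zeta$), together with the disjointness of the Margulis tubes $T_\gamma$ and $T_\zeta$, which follows from the Gromov--Margulis lemma since $\gamma \neq \zeta$ and both satisfy \eqref{newconst}. The engine of the proof is then the elementary fact that the injectivity radius is a $1$-Lipschitz function on $M$, so short paths cannot change $\inj_g$ much.

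Here is how I would execute the argument. Fix arbitrary $x \in V_\gamma$ and $y \in V_\zeta$, and let $c:[0,L] \to M$ be any piecewise smooth curve joining them. Since $V_\gamma \subset T_\gamma$, $V_\zeta \subset T_\zeta$, and $T_\gamma \cap T_\zeta = \emptyset$, the curve $c$ must exit $T_\gamma$ at a first parameter $t_1$, reaching a point $z_1 \in \partial T_\gamma$ with $\inj_g(z_1) = \mu$, and subsequently enter $T_\zeta$ at a parameter $t_2 \geq t_1$, reaching a point $z_2 \in \partial T_\zeta$ with $\inj_g(z_2) = \mu$. The $1$-Lipschitz property of $\inj_g$ then yields
\[
t_1 \geq d(x,z_1) \geq \inj_g(z_1) - \inj_g(x) \geq \mu - \tfrac{26}{50}\mu = \tfrac{24}{50}\mu,
\]
and analogously $L - t_2 \geq \tfrac{24}{50}\mu$. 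Summing, $L \geq \tfrac{48}{50}\mu$; taking the infimum over all such curves and over $x,y$ produces $d(V_\gamma, V_\zeta) \geq \tfrac{48}{50}\mu$.

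The remaining task is to upgrade this non-strict bound to the strict inequality claimed. Equality would force a minimizing geodesic from some $x_0 \in \partial V_\gamma$ to some $y_0 \in \partial V_\zeta$ saturating all three bounds: $\inj_g(x_0) = \inj_g(y_0) = \tfrac{26}{50}\mu$ and $z_1 = z_2 \in \partial T_\gamma \cap \partial T_\zeta$. In other words, a minimizing geodesic would have to pass instantaneously from $T_\gamma$ into $T_\zeta$ through a single shared boundary point. This configuration is precluded by the standard form of the Gromov--Margulis lemma: distinct Margulis tubes are separated by a collar of thick part of positive width, so their closures are in fact disjoint. I expect this final step to be the only genuine technical point of the argument; it is implicit in the development in \cite{BGS85}, and everything preceding it is essentially metric geometry.
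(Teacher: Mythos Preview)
Your proof is correct and essentially identical to the paper's: both locate the exit point from $T_\gamma$ and the entry point into $T_\zeta$ along a path realizing the distance, apply the $1$-Lipschitz property of $\inj_g$ together with the bound $\inj_g\leq\tfrac{26}{50}\mu$ on $V_\gamma$ from Lemma~\ref{lbound}, and sum the two contributions to obtain $\tfrac{48}{50}\mu$. You are slightly more explicit than the paper in isolating that the strict inequality rests on $\overline{T_\gamma}\cap\overline{T_\zeta}=\emptyset$, but the content is the same.
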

\begin{proof}
	Let $\beta$ be a unit speed geodesic realizing the distance between the compact sets $V_{\gamma}$ and $V_{\zeta}$. There exist $t_1, t_2\in (0, d(V_{\gamma}, V_{\zeta}))$ such that $\beta(t_1)\in \partial T_{\gamma}$ and $\beta(t_2)\in \partial T_{\zeta}$. By Lemma \ref{lbound} we have 
	\[
	t_1\geq \mu-\inj_g(\beta(t_1))\geq \mu-\frac{26}{50}\mu, \quad (d(V_{\gamma}, V_{\zeta})-t_{2})\geq \mu-\frac{26}{50}\mu,
	\]
	so that
	\[
	 d(V_{\gamma}, V_{\zeta})\geq \frac{24}{50}\mu+\frac{24}{50}\mu+d(T_{\gamma}, T_{\zeta})>\frac{48}{50}\mu.
	\]
	Note that by the usual Thick-Thin decomposition, the Margulis tubes $T_{\gamma}$ and $T_{\zeta}$ are disjoint.
\end{proof}

We also need to know that each $V_{\gamma}$ contains a ``quantum'' of volume. This is essential in studying sequences that BS-converge (\emph{cf.} Lemma \ref{numbertubes}). In \cite{BCD93}, the authors show this is the case for the tubes $T_{\gamma}$. We show that their argument can be extended to the tubes $V_{\gamma}\subset T_{\gamma}$.

\begin{lemma}\label{quanta}
	If $\gamma$ is a closed geodesic satisfying \eqref{newconst}, then
\begin{align}Vol(  U_{\gamma}^\lambda)>c_{n}\Big(\frac{\lambda}{2}\mu\Big)^{n},
\end{align}
and therefore 
	\[
	Vol(V_{\gamma})>c_{n}\Big(\frac{6}{25}\mu\Big)^{n}.
	\]
\end{lemma}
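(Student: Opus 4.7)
The plan is to locate an embedded geodesic ball of radius $\lambda\mu/2$ inside $U_\gamma^\lambda$ and then apply the volume comparison available under $\sec_g\leq 0$. Because $\sec_g\leq -1\leq 0$, the Bishop--G\"unther inequality guarantees that on any geodesic ball $B_r(x)$ on which $\exp_x$ is injective,
\[
Vol(B_r(x))\geq c_n r^n,
\]
with $c_n$ the volume of the Euclidean unit ball. The first inequality of the lemma will follow once such a ball is located inside $U_\gamma^\lambda$, and the second then follows by specializing $\lambda=24/50$ and invoking $U_\gamma^{24/50}\subset V_\gamma$ from Lemma \ref{lbound}.

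The first step is to produce a point $x\in T_\gamma$ with $\inj_g(x)=\lambda\mu/2$. Since $\gamma$ satisfies \eqref{newconst}, by adjusting $c_1$ if necessary we may assume $l(\gamma)<\lambda\mu$ for the relevant range of $\lambda$ (in particular for $\lambda = 24/50$). Then $\inj_g$ equals $l(\gamma)/2<\lambda\mu/2$ on $\gamma$ and approaches $\mu>\lambda\mu/2$ near $\partial T_\gamma$; by the $1$-Lipschitz continuity of $\inj_g$ along any radial arc, the intermediate value theorem yields such an $x$.

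Next, I claim $B_{\lambda\mu/2}(x)\subset U_\gamma^\lambda$. For any $y$ in this ball, the Lipschitz estimate delivers
\[
\inj_g(y)\leq \inj_g(x)+d(x,y)<\lambda\mu/2+\lambda\mu/2=\lambda\mu<\mu,
\]
so $y\in M_{<\mu}$. Applying the same estimate at every point $z$ of a minimizing geodesic segment from $x$ to $y$ shows $z\in M_{<\mu}$ as well. Since $M_{<\mu}$ is a disjoint union of Margulis tubes $T_{\gamma_i}$ and $x\in T_\gamma$, the whole segment, and hence $y$, stays in $T_\gamma$. Combined with $\inj_g(y)<\lambda\mu$, this gives $y\in U_\gamma^\lambda$. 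Since $\lambda\mu/2\leq \inj_g(x)$, the map $\exp_x$ is injective on the Euclidean ball of radius $\lambda\mu/2$, and Bishop--G\"unther then yields $Vol(B_{\lambda\mu/2}(x))\geq c_n(\lambda\mu/2)^n$, establishing the first inequality.

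The main obstacle is the containment step: \emph{a priori} the ball $B_{\lambda\mu/2}(x)$ could leak into a neighboring thin component of $M_{<\mu}$. What rules this out is the uniform bound $\inj_g<\mu$ along every minimizing geodesic segment emanating from $x$, which, together with the disjointness of distinct tubes $T_{\gamma_i}$, confines the ball to $T_\gamma$.
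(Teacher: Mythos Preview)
Your proof is correct and follows essentially the same approach as the paper: locate a point with injectivity radius exactly $\lambda\mu/2$, use the $1$-Lipschitz property of $\inj_g$ to show that the geodesic ball of radius $\lambda\mu/2$ about this point lies in $U_\gamma^\lambda$, then invoke comparison with Euclidean volume and specialize to $\lambda=24/50$ via $U_\gamma^{24/50}\subset V_\gamma$. You are in fact more careful than the paper about one point: you explicitly argue (via connectedness of the segment and disjointness of the Margulis tubes) that the ball cannot leak into a neighboring tube, a step the paper leaves implicit.
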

\begin{proof}
	Let $x\in  \partial U_{\gamma}^{\frac{\lambda}{2}}$, and let $y$ be such that $inj_g(y)=\lambda\mu/2$. %Let $\beta\in (0,1)$ and $y\in T_\gamma$   with $\inj_g(y)= \beta \lambda\mu$, that lies on the radial arc starting on $\gamma$ and passing through $x$. Then
	%\[
	%d(x, y)\geq \inj_g(x)-\inj_g(y)= \lambda(1-\beta)\mu.
	%\]
	We  claim that $B_{\frac{1}{2}\lambda \mu}(y)$, the open geodesic ball of radius $\frac{1}{2}\lambda \mu$ centered at $y$,  is entirely contained in $U_{\gamma}^\lambda$. In fact, for any $z\in B_{\frac{1}{2}\lambda \mu}(y)$ we have
	\[
	\frac{1}{2}\lambda \mu>d(z, y)\geq \Big|\inj_g(z)-\frac{1}{2}\lambda\mu\Big|,
	\]
	which forces $\inj_g(z)<\lambda\mu$. Thus, we conclude that $B_{\frac{1}{2}\lambda \mu}(y)\subset   U_{\gamma}^\lambda$.
	By volume comparison with Euclidean space we have:
	\[
	 Vol(  U_\gamma^\lambda) > Vol\Big(B_{\frac{1}{2}\lambda \mu}(y)\Big)\geq c_{n}\Big(\frac{1}{2}\lambda\mu\Big)^{n}.
	\]
Since $V_\gamma\supset U_\gamma^{\frac{24}{50}},$ we have $Vol(V_\gamma)\geq c_{n}\Big(\frac{6}{25}\lambda\mu\Big)^{n}.$
\end{proof}
 
We continue deriving effective estimates for the sizes of tubes in the Buser-Colbois-Dodziuk thick-thin decomposition.\\

Let $\delta_{p, v}(t)$ be a unit speed radial arc, and let  $t=R_\lambda$ be the first time  the radial arc intersects the boundary of $  U_{\gamma}^\lambda$.
We have the estimate
\begin{align}\label{Restimate}
c_{n}\Big(\frac{\lambda\mu}{2}\Big)^n \leq l(\gamma)a^{-(n-1)}\sinh^{n-1}(aR_\lambda).
\end{align}
In order to prove \eqref{Restimate}, we argue as follows. Let $y\in \delta_{p, v}([0, R_{\lambda}])$ be a point such that $\inj_g(y)=\frac{1}{2}\lambda\mu$. Then the inclusion $B_{\frac{\lambda\mu}{2}}(y)\subset   U_\gamma^\lambda$ follows from the proof of Lemma \ref{quanta}. %For any point $z\in B_{y}(\frac{\mu}{8})$, \eqref{standest} implies $\inj_g(z)<\frac{\mu}{2}$, so that 
%\[
%B_{y}(\frac{\mu}{8})\subset U_{\gamma}.
%\]
Next, we claim that 
\[
B_{\frac{\lambda\mu}{2}}(y)\subset \{z\in   U_{\gamma}^\lambda \,\, | \,\, d(z, \gamma)\leq R_\lambda\}.
\]
This follows from the triangle inequality since for any $z\in  B_{\frac{\lambda\mu}{2}}(y)$
\[
d(z, \gamma)\leq d(\gamma, y)+d(y, z)\leq R_\lambda-\frac{\lambda\mu}{2}+\frac{\lambda\mu}{2}=R_\lambda.
\]
Thus, by volume comparison with a space of constant sectional curvature  $-a^2$, we obtain the claimed inequality \eqref{Restimate}.
We therefore conclude there exists $k=k(a, n,\lambda)>0$ such that 
\begin{align}\label{rlower}
R_\lambda\geq \frac{\ln(\frac{1}{l(\gamma)})}{(n-1)a}+ k(a, n,\lambda).
\end{align}
Thus, if we denote by $R_{min,\lambda}(\gamma)$ the length of the shortest radial arc reaching $\partial   U_{\gamma}^\lambda$ we have that 
\[
\lim_{l(\gamma)\to 0}R_{min,\lambda}(\gamma)\to\infty.
\]

\subsection{Pointwise Bounds for Harmonic Forms on Manifolds with Boundary}\label{moserboundary}
  
Let $M_T:= M\setminus \cup_{\gamma} V_\gamma^\circ$, where the $V_\gamma$ are the modified tubes defined in \eqref{finaltube}. We will need  elliptic estimates for harmonic forms in $M_T$ satisfying Neumann or Dirichlet boundary conditions on $\p M_T$.  In order to keep track of both the dependence of the estimates on the geometry of the Margulis tube and on the local injectivity radius, in this section we provide a proof of these estimates. 

Let  $\|f\|_{L_{1}^2(B_{R}(p),g)}$ and $\|f\|_{L_{1}^2(B_{R}(p),\text{euclidean})}$ denote the Sobolev norm 
\[
\|f\|^2_{L^2(B_{R}(p))}+\|df\|^2_{L^2(B_{R}(p))}
\] 
computed with respect to $g$ and with respect to the Euclidean metric induced by the exponential map, respectively. 
\begin{proposition}\label{coro1}
Let $(M^n,g)$ be compact with $-a^2\leq sec_g\leq -1$. There is a constant $S(a,n)$ depending only on $a$ and $n$ so that 
	for all geodesic balls  
	$B_R(p)\subset M$, with $R\leq 1$,  and all   $ f\in C_c^\infty(B_{R}(p)\cap M_T)$,  one has 
	\begin{align}\label{sob3}S(a,n) \|  f\|^2_{L^2_1(B_R(p)\cap M_T)}\geq \| f\|_{L^{\frac{2n}{n-2}}(B_R(p)\cap M_T)}^2.
	\end{align}
\end{proposition}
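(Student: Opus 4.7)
The plan is to deduce the Sobolev inequality on the non-smooth domain $B_R(p)\cap M_T$ from the standard Sobolev inequality on the full ball $B_R(p)$, by constructing a uniformly bounded extension operator that carries $f$ across the portion of $\partial M_T$ sitting inside $B_R(p)$.

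First I would examine $\partial M_T\cap B_R(p)$, which is a finite union of portions of the smooth hypersurfaces $H_\gamma=\partial V_\gamma$. By Corollary~\ref{Lconst}, each $H_\gamma$ is locally the graph of a Lipschitz function with Lipschitz constant $\Lambda_H=\Lambda_H(a,n)$. By Lemma~\ref{lapart}, distinct tubes $V_\gamma$ are mutually separated by at least $\tfrac{48}{50}\mu$, so volume comparison on $B_{R+\mu}(p)$ against the model space of curvature $-a^2$ bounds the number of tubes that $B_R(p)$ can meet by a constant $N(a,n)$. Hence $\partial M_T\cap B_R(p)$ is the union of at most $N(a,n)$ Lipschitz graph pieces with uniformly controlled Lipschitz characteristic.

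Next I would produce an extension $\tilde f\in W^{1,2}_0(B_R(p))$ with $\tilde f=f$ on $B_R(p)\cap M_T$ and
\[
\|\tilde f\|_{W^{1,2}(B_R(p))}\leq E(a,n)\,\|f\|_{W^{1,2}(B_R(p)\cap M_T)}.
\]
Using a partition of unity subordinate to a cover whose multiplicity is controlled by $N(a,n)$, the construction reduces to extension across a single Lipschitz graph piece, where the standard reflection/Stein extension theorem gives an $E$ depending only on $\Lambda_H$ and $n$. Compactness of $\mathrm{supp}(f)$ inside $B_R(p)\cap M_T$ lets us extend by zero across the outer boundary $\partial B_R(p)$, so $\tilde f\in W^{1,2}_0(B_R(p))$.

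Finally I would apply the standard Sobolev inequality on the geodesic ball $B_R(p)$. Since $R\leq 1$ and $-a^2\leq \sec_g\leq -1$, Rauch comparison gives a bi-Lipschitz equivalence between the metric $\exp_p^*g$ on $B_R(0)\subset T_pM$ and the Euclidean metric, with distortion depending only on $a$ and $n$; this yields a uniform Euclidean-type Sobolev inequality on $B_R(p)$, valid even if $\exp_p$ is not injective (multiplicities appear on both sides of the pullback identity). Combining this with the extension bound gives
\[
\|f\|_{L^{2n/(n-2)}(B_R(p)\cap M_T)}^2 \leq \|\tilde f\|_{L^{2n/(n-2)}(B_R(p))}^2 \leq S(a,n)\,\|f\|_{W^{1,2}(B_R(p)\cap M_T)}^2.
\]
The main obstacle I anticipate is keeping the extension constant uniform when $B_R(p)$ meets several tubes at once; the uniform separation in Lemma~\ref{lapart} combined with the uniform Lipschitz control in Corollary~\ref{Lconst} is precisely what prevents the constant from degenerating, and is the reason the refined tubes $V_\gamma$ (rather than the raw Margulis tubes $T_\gamma$) are used here.
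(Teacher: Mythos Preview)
Your proposal is correct and follows essentially the same route as the paper: both arguments compare $g$ with the Euclidean metric on $B_R(p)$ via the exponential map, invoke the Stein/Lipschitz extension theorem with constant controlled by $\Lambda_H$ from Corollary~\ref{Lconst}, and then apply the Euclidean Sobolev inequality. The paper extends directly to $\IR^n$ rather than to $B_R(p)$, and does not spell out the multiple-tube issue you handle via Lemma~\ref{lapart} and a partition of unity; your remark that the pullback argument survives non-injectivity of $\exp_p$ is a detail the paper leaves implicit (it writes the proof assuming $R<\inj_g(p)$), but otherwise the two arguments are the same.
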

\begin{proof}For 
$R<\min\{\inj_g(p),1\}$, there are constants $c_{a,j}$ and $C_{a,j}$, $j=0,1$  depending only on $a$ and $n$ so that for any domain $A\subset B_R(p)$,
\begin{align}c_{a,0}  \|f\|_{L^{\frac{2n}{n-2}}(A,\text{euclidean})}\leq \|f\|_{L^{\frac{2n}{n-2}}(A,g)}\leq C_{a, 0}\|f\|_{L^2(A,\text{euclidean})},\end{align}
and
 \begin{align}c_{a,1}  \|f\|_{L_{1}^2(A,\text{euclidean})}\leq \|f\|_{L_{1}^2(A,g)}\leq C_{a,1}\|f\|_{L_{1}^2(A,\text{euclidean})}.\end{align}
 By \cite[Theorem 5]{Stein} and its proof (see also \cite{Jones}), there exists a bounded extension map 
$$E_{T,R,p}: W_0^{1,2}(B_{R}(p)\cap M_T,\text{euclidean})\to L_{1}^2(\IR^n,\text{euclidean}),$$ satisfying 
$E_{T,R,p}f -f = 0$ on $B_{R}(p)\cap M_T$ and 
$$\|E_{T,R,p}f\|_{L_{1}^2(\IR^n,\text{euclidean})}\leq B(\Lambda_H,n)\|f\|_{L_{1}^2(B_{R}(p),\text{euclidean})},$$ with bound $B(\Lambda_H,n)$ depending only on dimension and  the Lipschitz constant $\Lambda_H$ for a defining function for $\p M_T$ (\emph{cf.} Corollary \ref{Lconst}). Hence we have 
\begin{align}
\| f\|_{L^{\frac{2n}{n-2}}(B_R(p)\cap M_T,g)}^2&\leq C_{a,0}^2\| f\|_{L^{\frac{2n}{n-2}}(B_R(p)\cap M_T,\text{euclidean})}^2
\nonumber\\
&\leq   C_{a,0}^2\frac{(n-1)^2}{(n-2)^2}\| d(E_{T,R,p}f)\|_{L^{2}(\IR^n,\text{euclidean})}^2
\nonumber\\
&\leq   C_{a,0}^2\frac{(n-1)^2}{(n-2)^2}B(\Lambda_H,n)^2\|   f \|_{L_{1}^2(B_{R}(p)\cap M_T,\text{euclidean})}^2
\nonumber\\
&\leq   \frac{C_{a,0}^2}{c_{a,1}^2}\frac{(n-1)^2}{(n-2)^2}B(\Lambda_H,n)^2\|   f \|_{L_{1}^2(B_{R}(p)\cap M_T,g)}^2
\end{align}
Set $$S(a,n) := \frac{C_{a,0}^2}{c_{a,1}^2}\frac{(n-1)^2}{(n-2)^2}B(\Lambda_H,n)^2$$ 
to obtain the desired result. 
\end{proof}

Given Proposition \ref{coro1}, we can now prove the main estimate of this section.

\begin{proposition}\label{bndrymoser}
	 	Let $h$ be a strongly harmonic  form on $M_T$ satisfying Dirichlet or Neumann boundary conditions on $\p M_T$.
	Then there exist $c_n\in (0,\infty)$ independent of $M$ and $C_G>0$ depending on the second fundamental form of $H_\gamma$, the Margulis constant of $M$, and the Riemann curvature tensor of $M$ such that for any $p\in M_T$, and $L\leq 1$, 
	\begin{align}
	\| h\|^2_{L^{\infty}(B_{L}(p))}\leq c_{n} S(a,n)^{\frac{n}{2}}\Big(\frac{1}{L^2}+C_G\Big)^{\frac{n}{2}}\| h\|^2_{L^{2}(B_{2L}(p))}.
	\end{align}
In particular, choosing $L= \frac{\mu}{4}$, we have 
\begin{align}\label{runtiprun}
	| h|^2(p)\leq r(a,n)\| h\|^2_{L^{2}(B_{\frac{\mu}{2}}(p))}.
	\end{align}
where $r(a,n) := c_{n} S(a,n)^{\frac{n}{2}}\big(\frac{16}{\mu^2}+C_G\big)^{\frac{n}{2}}$.
	\end{proposition}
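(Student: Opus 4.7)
The plan is to combine a Bochner–Weitzenbock identity with a Moser iteration based on the Sobolev inequality of Proposition \ref{coro1}, taking care that the boundary contributions on $\partial M_T$ are absorbed into $C_G$ using the hypothesized Dirichlet or Neumann conditions.

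First I would establish a differential inequality for $u := |h|$. Since $h$ is strongly harmonic, the Weitzenbock formula gives $\nabla^*\nabla h = -\mathcal{R}(h)$ where $\mathcal{R}$ is the curvature endomorphism; combined with the refined Kato inequality, this yields a pointwise subsolution estimate of the form $u\,\Delta u \leq C_0\, u^2$ (analyst convention) away from the zero locus, where $C_0$ depends only on $n$ and $\|\mathrm{Rm}\|_\infty$. Next, I would test this inequality against $\eta^2 u^{2q-1}$ for a compactly supported cutoff $\eta$ on $B_{2L}(p)\cap M_T$. The integration-by-parts produces a boundary term $\int_{\partial M_T}\eta^2 u^{2q-2}\langle\nabla_\nu h,h\rangle$; using the Dirichlet or Neumann (i.e.\ relative or absolute) boundary condition together with the standard identity relating $\langle\nabla_\nu h,h\rangle$ to the second fundamental form of $H_\gamma$, this boundary contribution is controlled by $\int_{\partial M_T}\eta^2 u^{2q}\,|S_{H_\gamma}|$, which by a trace inequality on the Lipschitz domain of Corollary \ref{Lconst} is absorbed into an interior term of the form $\int_{B_{2L}(p)\cap M_T}\eta^2 u^{2q}\cdot C_G$. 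The upshot is the Caccioppoli estimate
\begin{equation*}
\int_{B_{2L}(p)\cap M_T}\eta^2 u^{2q-2}|\nabla u|^2 \,\leq\, C(n,q)\!\int_{B_{2L}(p)\cap M_T}\bigl(|\nabla\eta|^2+(C_G+1)\eta^2\bigr)u^{2q}.
\end{equation*}

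The second step is the Moser iteration proper. I would apply Proposition \ref{coro1} to $f=\eta u^q$ to obtain
\begin{equation*}
\|\eta u^q\|_{L^{2n/(n-2)}}^2 \,\leq\, S(a,n)\,\|\eta u^q\|_{L^2_1}^2,
\end{equation*}
and combine this with the Caccioppoli estimate to derive a reverse Hölder inequality with explicit constant proportional to $S(a,n)\bigl(L^{-2}+C_G\bigr)$, using cutoffs supported on concentric balls whose radii differ by $L 2^{-k}$. Iterating this inequality along the geometric sequence of exponents $q_k=(n/(n-2))^k$ and summing the resulting geometric series in $k$ yields the claimed estimate
\begin{equation*}
\|h\|^2_{L^\infty(B_L(p))}\,\leq\, c_n\, S(a,n)^{n/2}\bigl(L^{-2}+C_G\bigr)^{n/2}\|h\|^2_{L^2(B_{2L}(p))};
\end{equation*}
the factor $S(a,n)^{n/2}$ arises from summing $\sum_k 2^{-k}\log S(a,n)$ after telescoping, and the $(L^{-2}+C_G)^{n/2}$ factor tracks the worst term at each stage. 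Specializing to $L=\mu/4$ immediately gives the announced pointwise bound \eqref{runtiprun} with $r(a,n)=c_n S(a,n)^{n/2}\bigl(4/\mu^2+C_G\bigr)^{n/2}$.

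The main obstacle, and the reason this lemma is not immediate from the standard interior Moser iteration, is handling the boundary of $M_T$: the tubes $V_\gamma$ produce a non-empty $\partial M_T$ on which integration by parts generates terms involving the second fundamental form of $H_\gamma$. Controlling those terms requires the Lipschitz regularity of $H_\gamma$ from Corollary \ref{Lconst} (to apply the extension-based Sobolev inequality of Proposition \ref{coro1}) and the uniform control of $\|S_{H_\gamma}\|$ inherent in Theorem \ref{mainBCD}; both are exactly what packages the boundary data into the single constant $C_G$ and allows the interior Moser machine to run without modification.
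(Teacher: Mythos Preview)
Your proposal is correct and follows the same overall architecture as the paper: Bochner--Weitzenb\"ock with boundary term, control of the boundary contribution via the second fundamental form of $H_\gamma$, the Sobolev inequality of Proposition~\ref{coro1}, and then standard Moser iteration on nested balls with exponents $(n/(n-2))^k$.

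The one substantive difference is in how the boundary integral $\int_{\partial M_T}\eta^2|h|^{2q}\,|II|$ is converted into an interior term. You invoke a generic trace inequality for Lipschitz domains (relying on Corollary~\ref{Lconst}); the paper instead uses the radial vector field $\mathcal{R}$ of the tubes together with the angle bound $\theta<\pi/2-\alpha$ from Theorem~\ref{mainBCD}, applying the divergence theorem to $\eta_k^2|h|^{2p_k}\phi\, i_{\mathcal{R}}dv$ (where $\phi$ is a radial cutoff) to turn the surface integral into a bulk one with explicit constants in $\mu$, $\alpha$, and $a$. Both arguments yield a term $\epsilon\|\nabla(\eta|h|^{p_k})\|_{L^2}^2$ that must be absorbed on the left; you gloss over this absorption step (your sentence suggests the trace produces only an $L^2$ interior term), but since your Caccioppoli inequality has the correct form it is clear you intend the standard $\epsilon$-trace inequality and absorption. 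The paper's route makes all constants completely explicit in the tube geometry, while yours is more portable but leaves the trace constant implicit. Finally, the paper treats only the Dirichlet case explicitly and obtains the Neumann case at the end by observing that $\ast h$ satisfies Dirichlet conditions with $|\ast h|=|h|$; your uniform treatment of both cases via the $\langle\nabla_\nu h,h\rangle$--$II$ identity is equally valid.
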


\begin{proof} 
By Lemma \ref{lapart}, the connected components of $\partial M_{T}$ are uniformly apart. Without loss of generality, we assume $h$ satisfies Dirichlet boundary conditions: the pullback to the boundary of $h$ and $d^*h$  vanishes. 
	Let $\psi$ be a smooth function supported in $M_T$ (but not necessarily compactly supported in $M_T^0$).  
	Then we have 
\begin{align}\label{nobc}0 &= \int_{M_T} \langle \Delta h, \psi^2h \rangle dv  \nonumber\\
&=\int_{M_T} (\langle \nabla h, \nabla(\psi^2h) \rangle + \langle R^{riem} h, \psi^2h \rangle)dv-\int_{\p M_T} \langle \nabla_\nu h, \psi^2h \rangle d\sigma,
	\end{align}
where $\nu$ is an outward pointing unit normal and $R^{riem}$ denotes the curvature operator given in a local orthonormal frame $\{e_i\}_i$ and dual coframe $\{\omega^i\}_i$ by 
$R^{riem} = -e(\omega^i)e^*(\omega^j)R^{riem}(e_i,e_j),$ with $e(\omega^p)$ denoting exterior multiplication on the left by $\omega^p$, $e^*(\omega^p)$ its adjoint, and $R^{riem}(\cdot,\cdot)$ the Riemannian curvature 2 form. Since $h$ is strongly harmonic, we have 
\[
dh=\sum_{j\geq 1}e(\omega^i)\nabla_{e_i}h=0, \quad d^{*}h=-\sum_{j\geq 1}e^{*}(\omega^i)\nabla_{e_i}h=0,
\]
so that if $e_1=\nu$ on $\p M_T$, we obtain the identities
\begin{align}\nabla_\nu h = -\sum_{j>1}\langle (e^*(\omega^1) e(\omega^j)-  e^*(\omega^j)e(\omega^1))\nabla_{e_j}h.
\end{align}
Since $h$ satisfies Dirichlet boundary conditions, we have $e(\omega^1)h=0$ on $\partial M_{T}$. Thus, we have
\begin{align}\label{secf}-\langle \nabla_\nu h,\psi^2h\rangle =  -\sum_{j>1}\langle ( e^*(\omega^j)[e(\omega^1),\nabla_j]h,\psi^2h\rangle
=  \langle  II h,\psi^2h\rangle,
\end{align}
where $II$ denotes the second fundamental form operator 
$$II= II_{jk}e^*(\omega^j) e( \omega^k).$$
From \eqref{nobc} and \eqref{secf}, we obtain a Bochner formula for harmonic forms satisfying Dirichlet boundary conditions: 
	
	\begin{align}\label{bochfin}
	0&= \int_{M_T}( \langle \nabla  h, \nabla (\psi^2h)\rangle  + \langle  R^{riem} h ,  \psi^2h \rangle) dv +\int_{\p M_T} \langle   II h,   \psi^2h \rangle d\sigma.
	\end{align}
	This equality now allows us to proceed with the usual proof of Moser iteration with one additional modification required for controlling the boundary term. We repeat the standard argument and add in the contribution from the boundary. We will follow the treatment in \cite{cls1}.  
	
	Given $p\in M_T$,  let $\eta:\IR\to [0,1]$ be a smooth function identically $1$ on $(-\infty,L ]$ and supported on $(-\infty,2L]$, with $|d\eta|\leq \frac{2}{L}$. Set $\eta_k(x) = \eta(2^k(d(x,p)-L))$. Observe that the function $\eta_k(t)$ is equal to one on  $(-\infty,L(1+2^{-k})]$, and it is supported on $(-\infty,L(1+2^{1-k})]$.   Let $\chi_k$ denote the characteristic function of $B_k:=B(p,L(1+2^{-k}))$. Then 
	\begin{equation}\label{chibnd}
	\chi_k\leq  \eta_k \leq \chi_{k-1}, 
	\end{equation}
	and
	\begin{equation}\label{dchibnd}
	|d\eta_k|\leq \frac{2^{k+1}}{L}\chi_{k-1}.
	\end{equation}
	
	Now we choose $\psi = \eta_k|h|^{p_k-1}$ in \eqref{bochfin}  with $p_k$ to be chosen later to get 
	
	\begin{align}\label{moser0}
	\int_{M_T} |\nabla (\eta_k|h|^{p_k-1} h)|^2 dv &=\int_{M_T} (|d( \eta_k|h|^{p_k-1})|h| |^2   -  \langle  R^{riem} h ,\eta_k^2|h|^{2p_k-2}h \rangle) dv\nonumber\\
	&-\int_{\p M_T} \langle  II h,  \eta_k^2|h|^{2p_k-2}h \rangle d\sigma.
	\end{align}
	Expanding and rearranging terms yields 
	
	\begin{align}\label{moser0.5}
	\int_{M_T} |\nabla (\eta_k|h|^{p_k-1} h)|^2 dv &=\int_{M_T} \Big| \Big(\frac{p_k-1}{p_k}\Big)d(\eta_k|h|^{p_k})+\frac{1}{p_k}d(\eta_k)|h|^{p_k}   \Big|^2  dv\nonumber\\
	&-\int_{M_T} \langle  R^{riem} h ,   \eta_k^2|h|^{2p_k-2}h \rangle dv\nonumber\\
	&-\int_{\p M_T} \langle   II h,  \eta_k^2|h|^{2p_k-2}h \rangle d\sigma.
	\end{align}
	
	Observe that after some manipulation
	\begin{align}\notag
	\int_{M_T}& \Big| \Big(\frac{p_k-1}{p_k}\Big)d(\eta_k|h|^{p_k})+\frac{1}{p_k}d(\eta_k)|h|^{p_k}   \Big|^2  dv\nonumber\\
	&\leq\Big(1-\frac{1}{p_k}\Big)\int_{M_T} |d(\eta_k|h|^{p_k})|^2 dv + \frac{1}{p_{k}}\int_{M_T} |d(\eta_k)|h|^{p_k}|^2 dv.\nonumber
	\end{align}
	
 Now Kato's inequality implies the pointwise bound
 \[
 |\nabla (\eta_k|h|^{p_k-1} h)|\geq |d(\eta_k|h|^{p_k} ) |,
 \]
so that
	
	\begin{align}\label{moser1}
	\frac{1}{p_k} \int_{M_T} |d (\eta_k|h|^{p_k}  )|^2 dv&\leq 
	\frac{1}{p_k}\int_{M_T}|d(\eta_k)|h|^{p_k} |^2  dv+C_R\int_{M_T}   \eta_k^2 |h|^{2p_k} dv\nonumber\\
	&+C_{II}\int_{\p M_T}   \eta_k^2 |h|^{2 p_k}  d\sigma,
	\end{align}
where $C_{II}:= \|II\|_{L^\infty(\p M_T)},$ and $C_R:= \| R^{riem}\|_{L^\infty( M_T)}.$
	 Let $\phi$ be a $C^1$ cutoff function compactly supported in $[0,\frac{24\mu }{50})$ satisfying 
$$\phi(t) = 1, \text{ for }t\leq\frac{12\mu}{50},\text{ 
and  }|d\phi(t)| \leq  \frac{100}{12\mu}.$$ 
Then $\phi(d(x,H))$ is Lipschitz with Lipschitz constant $\frac{25}{3\mu}$, and by construction, it is identically equal to one on $\partial M_{T}$. Now, let $\mathcal{R}$ be the radial vector field as defined in Theorem \ref{mainBCD}. Also,  let $\theta$ be the angle between $\mathcal{R}$ and the unit normal $\nu$ on $\partial M_{T}$. We then have
\[
\langle\nu,\mathcal{R}\rangle=\cos{\theta}\geq \cos{(\pi/2-\alpha)}=\sin{\alpha}, 
\]
where $\alpha\in (0, \pi/2)$ is as in the first statement of Theorem \ref{mainBCD}. Thus, by applying the divergence theorem to $\mathcal{R}$
	\begin{align}\label{bndry}&C_{II}\int_{\p M_T}    \eta_k^2 |h|^{ 2p_k}  d\sigma    \nonumber\\
&\leq \frac{C_{II}}{\sin(\alpha)}\int_{\p M_T}    \eta_k^2 |h|^{2p_k} \langle\nu,\mathcal{R}\rangle  d\sigma \nonumber\\
&= \frac{C_{II}}{\sin(\alpha)}\int_{  M_T}  d(  \eta_k^2 |h|^{2 p_k}  \phi(d(x,H)) i_{\mathcal{R}}dv)\nonumber\\
	&\leq\frac{C_{II}}{\sin(\alpha)}\|d( \eta_k |h|^{ p_k}) \|_{L^2}\| \phi  \eta_k |h|^{ p_k} \|_{L^2(M_T)}\nonumber \\
	&\quad+\frac{C_{II}}{\sin(\alpha)}\Big(\frac{25}{3\mu}+a\coth(10a)\Big)^2\| \eta_k |h|^{ p_k}\|^2_{L^2(M_T)}
	\nonumber\\
	&\leq \frac{1}{2p_k }\|d( \eta_k |h|^{ p_k})\|_{L^2(M_T)}^2+ \frac{C_{II}}{\sin(\alpha)}\Big(\frac{100}{ \mu^2} + \frac{p_kC_{II} }{2\sin (\alpha)}\Big)\|   \eta_k |h|^{ p_k} \|_{L^2(M_T)}^2,
	\end{align}
 where we have used Lemmas \ref{bluelemma} and \ref{BCD1} to bound the covariant derivative of $i_{\mathcal{R}}$ above. Inserting this inequality back into \eqref{moser1} gives 

\begin{align}\label{moser2}
	  \|   \eta_k|h|^{p_k} \|^2_{ L_1^2 (M_T)}&\leq  2\Big(\frac{4^{k+1}}{L^2}+  \frac{100p_kC_{II}}{ \mu^2\sin(\alpha)} + \frac{p_k^2C_{II}^2 }{2\sin^2(\alpha)}+C_R+ \frac{1}{2}  \Big)\|   h  \|_{L^{2p_k}(B_{k-1})}^{2p_k} ,
	\end{align}

	Now we follow the usual Moser iteration proof as in \cite{cls1}. 
By Proposition \ref{coro1}, we have 
	\begin{align}\notag
	\| \eta_k |h|^{p_k}\|^2_{L^\frac{2n}{n-2}(M_T)}&\leq S(a,n)\|d(\eta_k|h|^{p_k})\|_{L_1^{2}(M_T)},
	\end{align}
so that
\begin{align}\label{moser3}
\| \eta_k |h|^{p_k}\|^2_{L^\frac{2n}{n-2}(M_T)}&\leq 
2 S(a,n)\Big(\frac{4^{k+1}}{L^2}+  \frac{100p_kC_{II}}{ \mu^2\sin(\alpha)} + \frac{p_k^2C_{II}^2 }{2\sin^2(\alpha)}+C_R + \frac{1}{2}\Big)\|   h  \|_{L^{2p_k}(B_{k-1})}^{2p_k}.
\end{align}	
	Set 
\begin{align}C_G:= \frac{100C_{II}}{ \mu^2\sin(\alpha)} + \frac{C_{II}^2 }{2\sin^2(\alpha)}+C_R+\frac{1}{2}.
\end{align}
For $p_k\geq 1$, 
	\begin{align}\label{moser4}
	\|  h \|^{2 p_k}_{L^{\frac{2n p_k }{n-2} }(B_k)}&\leq 
	2 S(a,n)p_k^24^{k+1}\Big(\frac{1}{L^2}+  C_G \Big)\|   h  \|_{L^{2p_k}(B_{k-1})}^{2p_k}.
	\end{align}
	Choose now $p_{k}=(\frac{n  }{n-2})^k.$
	Then taking $p_k$ roots of \eqref{moser4} and iterating yields 
	\begin{align}\label{moser5}
	\|  h \|^2_{L^{ 2p_{k+1}}(B_k)}&\leq 
	\prod_{j=0}^k  S(a,n)^{\frac{1}{p_j}}4^{\frac{j+2}{p_j}}p_j^{\frac{2}{p_j}}\Big(\frac{1}{L^2}+  C_G \Big)^{\frac{1}{p_j}}\| h\|^2_{L^{2 p_j}(B_{j-1})}.
	\end{align}
	Taking the limit as $k\to\infty$ and setting 
\begin{align}c_n:=\prod_{k=1}^\infty 4^{\frac{k+2}{p_k}}
\end{align} 
yields 
	\begin{align}\label{moserinfty}
	\|  h \|^2_{L^{ \infty}(B_L(p))}&\leq 
	 c_n S(a,n)^{\frac{n}{2}}\Big(\frac{1}{L^2}+  C_G \Big)^{\frac{n}{2}} \| h\|^2_{L^{2}(B_{2L}(p))}.
	\end{align} 
	If a harmonic form $h$ satisfies Dirichlet boundary conditions, then $\ast h$ satisfies Neumann boundary conditions, and $|\ast h| = |h|$. Hence the Dirichlet estimate implies the Neumann estimate. 
\end{proof}

\subsection{Non Uniformly Discrete Sequences}

Let $(X^n, g)$ be a simply connected manifold of dimension $n\geq 3$. Assume the sectional curvature is pinched:
\[
-a^2\leq\sec_g\leq-1
\]	
with $a\geq 1$. Let $(M_l, g_l)$ be a sequence of closed manifolds BS-converging to $(X, g)$, which we now assume \emph{not} to be uniformly discrete. For any element $(M_l, g_l)$ in the sequence, consider a small geodesic $\gamma$ such that
\[
l(\gamma)\leq c_1\exp(-c_2 a)\mu^n a^{n-1}
\]
as in \eqref{newconst} of Lemma \ref{BCD1}. Remove from $M_l$ the union of the modified tubes $V^l_{\gamma}$  to get a manifold with boundary
\[
M_{l,T}:= M_{l}\setminus \cup_{\gamma}V^l_{\gamma}, \quad \partial M_{l,T}=\cup_{\gamma}H^l_{\gamma}.
\]
Denote by $N_T(M_l)$ the number of disjoint tubes $V^l_{\gamma}$ in $M_l$. From the long exact sequence in cohomology,
\[
...\rightarrow H^{k}(M_{l,T}, \partial M_{l,T}; \IR)\rightarrow H^k(M_l; \IR)\rightarrow H^k(\cup_{\gamma}V^l_{\gamma}; \IR)\rightarrow H^{k+1}(M_{l,T}, \partial M_{l,T}; \IR)\rightarrow...,
\]
we  obtain the inequality
\begin{align}\label{bettiineq}
b_{k}(M_l)\leq \dim_{\IR}H^{k}(M_{l,T}, \partial M_{l,T}; \IR) + N_{T}(M_l).
\end{align}

The next lemma shows that we can control $N_T(M_l)$ in terms of the total volume.

\begin{lemma}\label{numbertubes}
	Let $(X^n, g)$ be a simply connected manifold of dimension $n\geq 3$ with
	\[
	-a^2\leq\sec_g\leq-1,
	\]	
	and $a\geq 1$. If $(M_l, g_l)$ is a sequence of closed manifolds BS-converging to $(X, g)$, then 
\begin{align}\frac{N_{T}(M_{l})}{Vol_{g_l}(M_{l})}<\frac{\rho(M_l,\mu)}{c_n\Big(\frac{21}{200}\mu\Big)^n}.
\end{align} 
%In particular, 
	%\[
	%\lim_{l\rightarrow\infty}\frac{N_{T}(M_{l})}{Vol_{g_l}(M_{l})}=0.
	%\]
\end{lemma}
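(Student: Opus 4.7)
The plan is to bound $N_T(M_l)$ from above by a volume packing argument, combining three properties of the modified Margulis tubes $V^l_\gamma$ already established in Section \ref{newTT}: each $V^l_\gamma$ sits inside the $\mu$-thin part, the tubes around distinct small geodesics are pairwise disjoint, and each carries a uniform ``quantum'' of volume.

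First, I would use Lemma \ref{lbound} to locate the tubes inside the thin part. Since every $x \in V^l_\gamma$ satisfies $\inj_{g_l}(x) \leq \tfrac{26}{50}\mu < \mu$, we have
\begin{align}
\bigcup_\gamma V^l_\gamma \subset (M_l)_{<\mu}.
\end{align}
Next, Lemma \ref{lapart} shows that for distinct small geodesics $\gamma \neq \zeta$ one has $d(V^l_\gamma, V^l_\zeta) > \tfrac{48}{50}\mu > 0$, so the tubes are pairwise disjoint and volumes add:
\begin{align}
Vol_{g_l}\big((M_l)_{<\mu}\big) \geq \sum_\gamma Vol_{g_l}(V^l_\gamma).
\end{align}

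Finally, I would invoke the inclusion $U_\gamma^{24/50} \subset V^l_\gamma$ from Lemma \ref{lbound} together with Lemma \ref{quanta} (applied with $\lambda = 24/50$) to get
\begin{align}
Vol_{g_l}(V^l_\gamma) \;>\; c_n \Big(\tfrac{12}{50}\mu\Big)^n \;=\; c_n \Big(\tfrac{48}{200}\mu\Big)^n \;>\; c_n \Big(\tfrac{21}{200}\mu\Big)^n,
\end{align}
which provides the required uniform lower bound (with room to spare, so strict inequality is preserved).

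Assembling the three displays yields
\begin{align}
N_T(M_l) \cdot c_n\Big(\tfrac{21}{200}\mu\Big)^n \;<\; Vol_{g_l}\big((M_l)_{<\mu}\big),
\end{align}
and dividing by $Vol_{g_l}(M_l)$ produces the claim. All the substantive geometry was carried out in Lemmas \ref{lbound}, \ref{lapart}, and \ref{quanta}; the final assembly is a pigeonhole/packing computation, so I anticipate no genuine obstacle. The only mild care needed is to ensure one uses the lower bound on $V^l_\gamma$ coming from $U_\gamma^{24/50}$ rather than from $U_\gamma^{1/2}$, since the former is what Lemma \ref{lbound} actually guarantees.
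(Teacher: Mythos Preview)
Your proposal is correct and follows essentially the same approach as the paper's own proof: both use Lemma \ref{lbound} to place the tubes inside $(M_l)_{<\mu}$, the disjointness of the $V^l_\gamma$ to sum their volumes, and Lemma \ref{quanta} for the uniform volume lower bound, then divide by $Vol_{g_l}(M_l)$. The only minor difference is that you explicitly invoke Lemma \ref{lapart} for disjointness and track the constant $c_n(12/50\,\mu)^n$ before weakening it to the stated $c_n(21/200\,\mu)^n$, whereas the paper simply cites Lemma \ref{quanta} directly.
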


\begin{proof}
	By Lemma \ref{quanta}, every Margulis tube $V^l_{\gamma}$ satisfies
	\[
	Vol(V^{l}_{\gamma})>c_n\Big(\frac{21}{200}\mu\Big)^n:=\epsilon_{0}
	\]
	where $c_n$ is a positive constant depending on the dimension only, and $\mu(n, a)>0$ is the usual Margulis constant for a negatively curved  $a$-pinched $n$-manifold. By Lemma \ref{lbound}, 
if $\gamma$ is a short geodesic satisfying Equation \eqref{newconst}, then
	\[
	V^l_{\gamma}\subset (M_{l})_{<\mu}.
	\]
	  Thus, we have
	\[
	 \frac{N_{T}(M_{l})\cdot\epsilon_{0}}{Vol(M_{l})}\leq  \frac{\sum_{\gamma}Vol(V^l_{\gamma})}{Vol(M_l)}<  \rho(M_{l},\mu).
	\]
\end{proof}

\begin{lemma}\label{lemma1}
	Let $(X^n, g)$ be a simply connected manifold of dimension $n\geq 3$ with
	\[
	-a^2\leq\sec_g\leq-1,
	\]	
	and $a\geq 1$. Let $(M_l, g_l)$ be a sequence of closed manifolds BS-converging to $(X, g)$. For any $k\in\IN$ such that 
	\[
	a_{n, k}=(n-1)-2ka> 0,
	\]
	we have for all $R\gg1$, 
	\[
	 \frac{\dim_{\IR}H^{k}(M_{l,T}, \partial M_{l,T}; \IR)}{Vol_{g_l}(M_l)}\leq  \left(\begin{array}{c}n\\k\end{array}\right)r(a,n) \rho(M_{l},R)+c(n, k)e^{-a_{n, k}(R-1)} . 
	\]
If $a_{n, k}= 0,$ we have for all $R\gg1$, 
	\[
	 \frac{\dim_{\IR}H^{k}(M_{l,T}, \partial M_{l,T}; \IR)}{Vol_{g_l}(M_l)}\leq  \left(\begin{array}{c}n\\k\end{array}\right)r(a,n) \rho(M_{l},R)+d(n, k)(R-2)^{-1} . 
	\]
\end{lemma}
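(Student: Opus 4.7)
Proof plan. The plan is to adapt the Schwartz-kernel bound from Section 2 to the compact manifold with boundary $M_{l,T}$, using Proposition \ref{bndrymoser} in place of Lemma \ref{Moser} and applying Theorem \ref{pinchedprice} on interior balls inside $M_{l,T}$. By Hodge-Morrey-Friedrichs theory, the relative cohomology $H^k(M_{l,T}, \partial M_{l,T}; \IR)$ is canonically isomorphic to the space $\mathcal{H}^k_D(M_{l,T})$ of strongly harmonic $k$-forms satisfying the Dirichlet boundary conditions already employed in Proposition \ref{bndrymoser}. Let $K(x,y)$ denote the Schwartz kernel of the $L^2$-orthogonal projection onto $\mathcal{H}^k_D(M_{l,T})$. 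A verbatim repetition of the proof of Lemma \ref{keyestimate} then yields
\[
\dim_{\IR} H^k(M_{l,T}, \partial M_{l,T}; \IR) = \int_{M_{l,T}} \mathrm{Tr}\, K(x,x)\, dv_{g_l} \leq \binom{n}{k} \int_{M_{l,T}} \sup_{\alpha \in \mathcal{H}^k_D(M_{l,T}),\, \|\alpha\|_{L^2}=1} |\alpha(x)|^2\, dv_{g_l}.
\]

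Next I would split the domain of integration as $M_{l,T} = \bigl(M_{l,T} \cap (M_l)_{<R}\bigr) \sqcup \bigl(M_{l,T} \cap (M_l)_{\geq R}\bigr)$ for $R \gg \mu$. By Lemma \ref{lbound} every tube satisfies $V_\gamma \subset (M_l)_{<\mu}$, so by the $1$-Lipschitz property of $\inj_{g_l}$ every $x \in M_{l,T}$ admits the ball $B_{\mu/2}(x) \subset M_{l,T}$, and the pointwise bound \eqref{runtiprun} from Proposition \ref{bndrymoser} gives the uniform estimate $|\alpha(x)|^2 \leq r(a,n)$ for every unit-$L^2$-norm $\alpha \in \mathcal{H}^k_D(M_{l,T})$ and every $x \in M_{l,T}$. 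Integrating over the thin part yields a contribution bounded by $\binom{n}{k} r(a,n) \rho(M_l, R) \cdot Vol_{g_l}(M_l)$. For the thick part, if $\inj_{g_l}(x) \geq R$ then again by Lipschitz continuity the ball $B_{R-\mu}(x)$ is embedded in $M_l$ and entirely contained in $M_{l,T}$, so any Dirichlet harmonic $\alpha$ restricts to a classical strongly harmonic form on this ball. The derivation of Theorem \ref{pinchedprice} in \cite{DS17} uses only a Bochner formula tested against cutoffs compactly supported inside the ball, and therefore applies verbatim in this interior setting to yield the exponential bound (when $a_{n,k}>0$) or polynomial bound (when $a_{n,k}=0$) on $\int_{B_1(x)}|\alpha|^2\,dv$ with the global $L^2$-norm $\|\alpha\|_{L^2(M_{l,T})}$ on the right-hand side.

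Combining this with \eqref{runtiprun} applied on $B_{\mu/2}(x) \subset B_1(x)$ promotes the $L^2$ decay estimate into a pointwise decay estimate on $|\alpha(x)|^2$; after absorbing the $a$-dependent factor $e^{a_{n,k}\mu}$ into the prefactor, one obtains precisely $c(n,k) e^{-a_{n,k}(R-1)}$ in the exponential case, and the analogous polynomial bound $d(n,k)(R-2)^{-1}$ when $a_{n,k}=0$. Summing the thin and thick contributions and dividing by $Vol_{g_l}(M_l)$ produces the two claimed inequalities.

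The main obstacle is justifying that Theorem \ref{pinchedprice}, stated in the excerpt for closed manifolds, applies to Dirichlet harmonic forms on $M_{l,T}$ restricted to interior balls. This extension is essentially a bookkeeping check: the Bochner-with-cutoff proof in \cite{DS17} is entirely local, no boundary term from $\partial M_{l,T}$ ever enters, and the only adjustment is the replacement of $\|\alpha\|_{L^2(M)}$ by $\|\alpha\|_{L^2(M_{l,T})}$ throughout. One must verify that the hypothesis $R - \mu > 1 + \ln(2)/a_{n,k}$ (respectively $R - \mu > 1$) of Theorem \ref{pinchedprice} is satisfied, which is automatic once $R$ is sufficiently large relative to the Margulis constant.
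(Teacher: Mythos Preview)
Your proposal is correct and follows essentially the same approach as the paper: identify the relative cohomology with Dirichlet harmonic forms, integrate the trace kernel, split into thin and thick parts, apply Proposition~\ref{bndrymoser} on the thin part and the Price inequality of Theorem~\ref{pinchedprice} on interior balls in the thick part. One minor slip: the claim that $B_{\mu/2}(x)\subset M_{l,T}$ for every $x\in M_{l,T}$ is false (if $x\in H_\gamma$ the ball enters the excised tube), but you do not need it---Proposition~\ref{bndrymoser} is precisely a boundary Moser estimate and \eqref{runtiprun} already delivers $|\alpha(x)|^2\le r(a,n)\|\alpha\|_{L^2(M_{l,T})}^2$ for all $x\in M_{l,T}$, which is what the paper invokes directly.
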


\begin{proof}
	For any $l\in\IN$, denote by
	\[
	i: \partial M_{l,T}\rightarrow M_{l,T}
	\]
	the injection map. Recall that  the relative cohomology $H^{k}(M_{l,T}, \partial M_{l,T}; \IR)$ is isomorphic to the space of harmonic forms satisfying Dirichlet boundary conditions:
	\[
	H^{k}(M_{l,T}, \partial M_{l,T}; \IR)\simeq\{\alpha\in C^{\infty}(\Omega^k T^*(M_{l,T}\backslash\partial M_{l,T}))\,\, | \,\, \Delta_{k}\alpha=0,\,\, i^*\alpha=0\}.
	\]
	Let $\{\alpha_i\}_{i}$ be an $L^2$ orthonormal basis of harmonic forms satisfying Dirichlet boundary conditions, and consider the function
	\[
	TrK(x, x)=\sum_i|\alpha_{i}(x)|^2
	\]
	which satisfies
	\[
	\int_{M_{l,T}}TrK(x, x)dv = \dim_{\IR}H^k(M_{l,T}, \partial M_{l,T}; \IR).
	\]
	By Proposition \ref{bndrymoser}, for any unit norm harmonic $k$-form $\alpha$ satisfying Dirichlet boundary conditions in $M_{l,T}$, there exists a constant $r(a, n)>0$ such that
	\begin{align}\label{thinest}
	|\alpha(p)|^2\leq r(a, n),
	\end{align}
    for any  $p\in M_{l,T}.$ %in a $\epsilon=\epsilon(n, a)>0$ neighborhood of the boundary.
 On the other hand if we take a point in $p\in M_{l,T} $ with large injectivity radius,  we can apply Theorem \ref{pinchedprice} to obtain much stronger bounds. %that when regarded as a point in $M_l$ is such that $inj_{g_l}(p)=R$. The sequence $(M_l, g_l)$ is BS-converging to the $a$-pinched Hadamard manifold $X$. Thus, points with large injectivity radius are ubiquitous in all $M_l$ for large $l$. 
Consider $p\in M_{l}$ with $inj_g(p)=R\gg1$. We observe that any such point lies in $M_{l,T}$ and must be quite distant from $\partial M_{l,T}\subset M_l$. Indeed, by Lemma \ref{lbound}, as any point $q\in\partial M_{l,T}=\cup_{\gamma}H^l_{\gamma}$ satisfies $\inj_{g_l}(q)\leq\frac{26}{50}\mu$, we have
    \begin{align}\label{Rminus}
    d_{g_{l}}(p, q)\geq \inj_{g_{l}}(p)-\frac{26}{50}\mu>R-\mu>R-1,
    \end{align}
    as we have assumed $\mu<1$. Thus  if $p\in M_{l}$ has $\inj_{g_l}(p)\geq R>>1,$ then $\bar B_{R-1}(p)\subset  M_{l,T}^0$. Set 
    \[
    (M_{l,T})_{< R}:= (M_l)_{< R}\cap M_{l,T}
    \]
    and similarly $(M_{l,T})_{\geq R}$ as its complement in $M_{l,T}$. The Price inequality given in Theorem \ref{pinchedprice} then tells us that for $R>>1,$
    \begin{equation}\label{boundary1}
    \int_{(M_{l,T})_{\geq R}}TrK(x, x)dv\leq  \;     \begin{cases} 
    c(n, k)e^{-a_{n, k}(R-1)}Vol((M_{l,T})_{\geq R}) &\mbox{if }\quad    a_{n, k} > 0 ,  \\
    d(n, k)(R-2)^{-1}Vol((M_{l,T})_{\geq R}) & \mbox{if }\quad   a_{n, k} = 0.
    \end{cases}
    \end{equation}
    Moreover, by \eqref{runtiprun}
    \begin{align}\label{boundary2}
    \int_{(M_{l,T})_{< R}}TrK(x, x)dv\leq \left(\begin{array}{c}n\\k\end{array}\right)r(a,n) Vol((M_{l,T})_{< R}).
    \end{align}
    Combining \eqref{boundary1} and \eqref{boundary2} together with the inequalities  
    \[
     \frac{Vol_{g_l}((M_{l,T})_{<R})}{Vol_{g_l}(M_l)}\leq\rho(M_l,R),
    \]
    we immediately obtain  
    the desired inequalities.
\end{proof}

%\begin{lemma}\label{lemma2}
%	Let $(X^n, g)$ be a simply connected manifold of dimension $n\geq 3$ with
	%\[
	%-a^2\leq\sec_g\leq-1,
	%\]	
	%and $a\geq 1$. Let $(M_l, g_l)$ be a sequence of closed manifolds BS-converging to $(X, g)$. We then have
	%\[
	%\lim_{l\to\infty}\frac{Vol_{g_l}(M_{l,T})}{Vol_{g_l}(M_l)}=1.
	%\] 
%\end{lemma}
%\begin{proof}
%	By definition of $M_{l,T}$, we have
%	\[
%	\frac{Vol_{g_l}(M_{l,T})}{Vol_{g_l}(M_l)}=1-\frac{Vol_{g_l}(\cup_{\gamma}V^l_{\gamma})}{Vol_{g_l}(M_l)}.
%	\]
%	Since $(M_l, g_l)$ is BS-converging and all point in $V^l_{\gamma}$ have injectivity radius less or equal that half of the Margulis constant (\emph{cf.} Lemma \ref{lbound}), we conclude the proof.
%\end{proof}

We now have the main theorem of this section.

\begin{theorem}\label{barvolume}
	Let $(X^n, g)$ be a simply connected manifold of dimension $n\geq 3$ with
	\[
	-a^2\leq\sec_g\leq-1,
	\]	
	and $a\geq 1$. Let $(M_l, g_l)$ be a sequence of closed manifolds BS-converging to $(X, g)$. For any $k\in\IN$ such that 
	\[
	a_{n, k}=(n-1)-2ka> 0,
	\]
we have for all $R\gg1$, 
	\[
	 \frac{b_k(M_{l } )}{Vol_{g_l}(M_l)}\leq  (\epsilon^{-1}_0+\left(\begin{array}{c}n\\k\end{array}\right)r(a,n) )\rho(M_{l},R)+c(n, k)e^{-a_{n, k}(R-1)} . 
	\]
If $a_{n, k}= 0,$ we have for all $R\gg1$, 
	\[
	 \frac{b_k(M_{l } )}{Vol_{g_l}(M_l)}\leq (\epsilon^{-1}_0+ \left(\begin{array}{c}n\\k\end{array}\right)r(a,n)) \rho(M_{l},R)+d(n, k)(R-2)^{-1} . 
	\]
	Consequently
	\[
	\lim_{l\rightarrow\infty}\frac{b_{k}(M_{l})}{Vol_{g_l}(M_{l})}=0.
	\]
\end{theorem}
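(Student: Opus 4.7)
The plan is to combine the three main ingredients already assembled in this section: the topological inequality \eqref{bettiineq} coming from the long exact sequence of the pair $(M_l, \partial M_{l,T})$, the counting bound on Margulis tubes from Lemma \ref{numbertubes}, and the dimension estimate for the relative cohomology from Lemma \ref{lemma1}. Once these are in hand, the theorem follows by dividing by $Vol_{g_l}(M_l)$ and letting $l\to\infty$ along the BS-convergent sequence.

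First, I would invoke \eqref{bettiineq} to write
\[
\frac{b_k(M_l)}{Vol_{g_l}(M_l)} \leq \frac{\dim_{\IR} H^k(M_{l,T}, \partial M_{l,T}; \IR)}{Vol_{g_l}(M_l)} + \frac{N_T(M_l)}{Vol_{g_l}(M_l)}.
\]
Next, I would bound the tube-counting term using Lemma \ref{numbertubes}: since every modified tube $V_\gamma^l$ contributes at least $\epsilon_0 = c_n(\tfrac{21}{200}\mu)^n$ of volume and is contained in the $\mu$-thin part, we obtain
\[
\frac{N_T(M_l)}{Vol_{g_l}(M_l)} \leq \epsilon_0^{-1}\rho(M_l,\mu) \leq \epsilon_0^{-1}\rho(M_l,R)
\]
for any $R\geq\mu$, where the last inequality uses monotonicity of $\rho(M_l,\cdot)$ in $R$.

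Next, I would plug in Lemma \ref{lemma1} to control the relative cohomology term, which splits naturally according to whether the Price exponent $a_{n,k}$ is positive or zero. Adding the two bounds gives exactly the two inequalities stated in the theorem (with the coefficient $\epsilon_0^{-1} + \binom{n}{k}r(a,n)$ in front of $\rho(M_l,R)$). The final step is to take the limit $l\to\infty$: for any fixed large $R$, BS-convergence of $(M_l,g_l)$ to $(X,g)$ gives $\rho(M_l,R)\to 0$, so the right-hand side tends to either $c(n,k)e^{-a_{n,k}(R-1)}$ or $d(n,k)(R-2)^{-1}$. Since these can be made arbitrarily small by choosing $R$ large, a standard diagonal argument (choosing $R_l\to\infty$ slowly enough that $\rho(M_l,R_l)\to 0$ still holds along the BS-convergent sequence) concludes $\lim_{l\to\infty}\tilde b_{k,g_l}(M_l) = 0$.

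There is no real obstacle remaining; the entire difficulty has been absorbed into the preceding sections. The only subtle point is that, unlike Theorem \ref{RBS} or Corollary \ref{sectionaluniform}, we do not require weak uniform discreteness here: the Moser-on-the-boundary estimate \eqref{runtiprun} gives a bound on harmonic forms with Dirichlet boundary conditions that is \emph{independent} of the global injectivity radius of $M_l$, which is exactly why the extra factor $(1+1/\inj)^n$ does not appear. I would emphasize this uniformity in the write-up, since it is what allows us to drop the weak uniform discreteness hypothesis.
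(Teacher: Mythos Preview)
Your proposal is correct and follows essentially the same route as the paper's own proof, which simply cites \eqref{bettiineq}, Lemma \ref{numbertubes}, Lemma \ref{lemma1}, and the monotonicity of $\rho(M,R)$ in $R$. You have spelled out the details the paper leaves implicit, including the diagonal argument for the limit and the reason weak uniform discreteness is unnecessary.
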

\begin{proof}
	The proof follows from \eqref{bettiineq}, Lemma \ref{numbertubes}, and Lemma \ref{lemma1}, and the monotonicity of in $R$ of $\rho(M,R)$.
\end{proof}

%\begin{remark}
%	Note that if the sequence is \emph{not} uniformly discrete, there exists a subsequence
	\[
%	\lim_{l\to\infty}\big(Vol_{g_l}(M_{l})-Vol_{g_l}(M_{l,T})\big)=\lim_{l\to\infty}Vol_{g_{l}}(\cup_{\gamma} V^{l}_{\gamma})=\infty.
	\]
%	We refer to Lemma \ref{largeV} for a proof of this fact. Concluding, up to a subsequence, $b_{k}(M_{l})$ has more than just subvolume growth.
%\end{remark}

%%%%%%%%%%%%%%%%%%%%


\begin{thebibliography}{ELMNPM}
%%%%%%%%%%%%%%%%%%%%

\bibitem[ABBGNRS17]{Bergeron} M. Abert, N. Bergeron, I. Biringer, T. Gelander, N. Nikolov, J. Raimbault, I. Samet, On the growth of $L^{2}$-invariants for sequences of lattices in Lie groups. \textit{Ann. of Math. (2)} \textbf{185} (2017), no. 3, 711-790.

\bibitem[ABBG23]{Bergeron2} M. Abert, N. Bergeron, I. Biringer, T. Gelander, Convergence of normalized Betti numbers in nonpositive curvature. \textit{Duke Math. J.} \textbf{172} (2023), no. 4, 633-700.

\bibitem[Ati76]{Atiyah} M. F. Atiyah, Elliptic operators, discrete groups and von Neumann algebras. \textit{Colloque ``Analyse et Topologie'' en l'Honneur de Henri Cartan (Orsay, 1974)}, pp. 43-72, Ast\'erisque, No. 32-32, \textit{Soc. Math. France, Paris}, 1976. 

\bibitem[Aub82]{aubin} T. Aubin, \emph{Nonlinear Analysis on Manifolds. Monge-Amp\'ere Equations},  Grundlehren Math. Wiss. 252, Springer-Verlag, New York,  1982.  

\bibitem[BGS85]{BGS85} W. Ballmann, M. Gromov, V. Schroeder, Manifolds of nonpositive curvature. Progress in Mathematics, 61. \textit{Birkh\"auser Boston, Inc., Boston, MA,} 1985 

\bibitem[BS01]{BS01} I. Benjamini, O. Schramm, Recurrence of distributional limits of finite planar graphs. \textit{Electron. J. Prob.} \textbf{6} (2001), no. 23, 13 pp.

\bibitem[BCD93]{BCD93} P. Buser, B. Colbois, J. Dodziuk, Tubes and Eigenvalues for Negatively Curves Manifolds. \textit{J. Geom. Anal.} \textbf{3} (1993), no. 1, 1-26.

\bibitem[Ch01]{Ch01} J. Cheeger, Degeneration of Riemannian metrics under Ricci curvature bounds,  Lezioni Fermiane. [Fermi Lectures]
Scuola Normale Superiore, Pisa, (2001). ii+77 pp.

\bibitem[CLS21]{cls1} S. Cherkis,  A. Larra\'in-Hubach, M. Stern, Instantons on Multi-Taub-NUT Spaces I: Asymptotic Form and Index Theorem. \textit{J. Differential Geom.} \textbf{119} (2021), no. 1, 1-72.

\bibitem[DW78]{Wallach} D. DeGeorge and  N. Wallach, Limit formulas for multiplicities in $L^{2}(G/\Gamma)$, I. \textit{Annals of Math.} \textbf{107} (1978), no. 1, 133-150.

\bibitem[DW79]{Wallach1} D. DeGeorge and N. Wallach, Limit formulas for multiplicities in $L^{2}(G/\Gamma)$, II. \textit{Annals of Math.} \textbf{109} (1979), no. 3, 477-495.

\bibitem[DS22]{DS17} L. F. Di Cerbo, M. Stern, Price Inequalities and Betti number growth on manifolds without conjugate points. \textit{Comm. Anal. Geom.} \textbf{30} (2022), no. 2, 297-334.

\bibitem[doC92]{doC92} M. P. do Carmo, Riemannian Geometry. Mathematics: Theory \& Applications. \textit{ Birkhäuser Boston, Inc., Boston, MA,} 1992.

\bibitem[Dod79]{Dodziuk} J. Dodziuk, $L^{2}$ Harmonic forms on rotationally symmetric riemannian manifolds. \textit{Proc. Amer. Math. Soc.} \textbf{77} (1979), no. 3, 395-400.

\bibitem[DX84]{Donnelly} H. Donnelly, F. Xavier, On the Differential Form Spectrum of Negatively Curved Riemannian Manifolds. \textit{Amer. J. Math.} \textbf{106} (1984), no. 1, 169-185.

%\bibitem[Her63]{Hermann} R. Hermann, Focal points of closed submanifolds of Riemannian spaces. \textit{Nederl. %Akad. Wetensch. Proc. Ser. A 66 = Indag. Math.} \textbf{25} (1963), 613-628.

\bibitem[Jon81]{Jones} P. Jones, Quasiconformal mappings and extendability of functions in sobolev spaces. \textit{Acta Math.} \textbf{147} (1981),  no. 1-2, 71-88.

\bibitem[LS18]{LS} M. Lipnowski, M. Stern, Geometry of the Smallest 1-form Laplacian Eigenvalue on Hyperbolic Manifolds, \textit{Geom. Funct. Anal.} \textbf{28} (2018), no. 6, 1717-1755.

\bibitem[L\"uc94]{Luck} W. L\"uck, Approximating $L^{2}$-invariants by their finite-dimensional analogues. \textit{Geom. Funct. Anal.} \textbf{4} (1994), no.4,  455-481.
 
\bibitem[L\"uc02]{LuckBook} W. L\"uck, $L^{2}$-invariants: theory and applications to geometry and $K$-theory. Results in Mathematics and Related Areas. 3rd Series. A Series of Modern Surveys in Mathematics, 44. \textit{Springer-Verlag, Berlin,} 2002.

\bibitem[Pet15]{Petersen} P. Petersen, Riemann Geometry. Fourth Edition. Graduate Texts in Mathematics, 171. \textit{Springer-Verlag, New York,} 2015.

%\bibitem[Pri83]{Price} P. Price, A monotonicity formula for Yang-Mills fields. \textit{Manuscripta Math.} \textbf{43} (1983), no. 2-3, 131-166. \textcolor{red}{Do we use this reference?}

\bibitem[Ste70]{Stein} E. M. Stein, Singular integrals and differentiability properties of functions.   Princeton Mathematical Series, No. 30 \textit{Princeton University Press, Princeton, N. J.} 1970.
\end{thebibliography}
\end{document}